\documentclass[11pt, fleqn, leqno, a4paper]{article}

\usepackage[utf8]{inputenc}
\usepackage[normalem]{ulem} 
\usepackage[T1]{fontenc}
\usepackage[english]{babel}
\usepackage{amsmath, amssymb, amsfonts, amsthm}
\newtheorem{assumption}{Assumption}
\usepackage{geometry}
\usepackage{graphicx}
\usepackage{parskip} 
\usepackage{hyperref}
\usepackage{mathtools} 
\usepackage{graphicx}  
\usepackage{geometry}  
\usepackage{multirow} 
\usepackage{authblk}  
\usepackage{hyperref} 
\usepackage{cite}
\newcommand{\wt}{\widetilde}
\newcommand{\wh}{\widehat}
\newcommand{\eps}{\varepsilon}
\def\ba{\mathbf{a}}
\def\bb{\mathbf{b}}

\def\bd{\mathbf{d}}

\def\bg{\mathbf{g}}

\def\bn{\mathbf{n}}
\def\bp{\mathbf{p}}
\def\bq{\mathbf{q}}
\def\br{\mathbf{r}}
\def\bs{\mathbf{s}}

\def\bv{\mathbf{v}}
\def\bw{\mathbf{w}}
\def\bx{\mathbf{x}}
\def\bz{\mathbf{z}}
\def\by{\mathbf{y}}

\def\zero{\mathbf{0}}
\def\bxi{\boldsymbol{\xi}}
\newcommand{\ph}{\phantom}
\newcommand{\calo}{\mathcal{O}}

\newcommand{\R}{\mathbb{R}}

\newcommand{\gammac}{\overline{\gamma}}

\newcommand{\rhoc}{\overline{\rho}}
\newcommand{\mtxa}[2]{\left[\!\!
\begin{array}{#1} #2 \end{array} \!\! \right]}
\newcommand{\smtxa}[2]{
{\mbox{\scriptsize
$\left[\!\! \begin{array}{#1} #2 \end{array} \!\! \right]$}}}
\DeclareMathOperator*{\argmin}{arg\,min}
\usepackage{algorithm}
\usepackage{algorithmicx}
\usepackage{algpseudocode}
\usepackage{longtable}
\hypersetup{
    colorlinks=true,
    linkcolor=blue,
    urlcolor=cyan,
}
\usepackage{subcaption}
\usepackage{xcolor}

\definecolor{dorange}{RGB}{220,110,0}

\newtheorem{theorem}{Theorem}[section]
\newtheorem{lemma}[theorem]{Lemma}
\newtheorem{example}[theorem]{Example}
\newtheorem{proposition}[theorem]{Proposition}
\newtheorem{corollary}[theorem]{Corollary}
\theoremstyle{definition}

\theoremstyle{remark}
\newtheorem{remark}{Remark}[section]
\title{\textbf{ A Twin gradient method for unconstrained optimization}}

\author[1,2]{Anna De Magistris}
\author[3]{Michiel E.~Hochstenbach}
\author[1,2]{Gerardo Toraldo}

\affil[1]{\small Department of Mathematics and Physics, University of Campania ``Luigi Vanvitelli'', Viale Lincoln 5, 81100 Caserta, Italy}

\affil[2]{\small Member of the INdAM research group GNCS}

\affil[3]{\small Department of Mathematics and Computer Science, Eindhoven University of Technology, PO Box 513, 5600 MB Eindhoven, The Netherlands}

\date{}

\begin{document}

\maketitle

\begin{abstract}
We propose a new strategy for gradient-based unconstrained optimization, involving two parallel sequences of iterates that cooperate to determine their stepsizes via a \textit{Twin-Step} principle. Rather than minimizing the objective function individually, the algorithm selects steplengths that minimize the Euclidean distance between the two gradient based processes occurring simultaneously at each iteration. 
The theoretical analysis  shows that the convergence of the mutual distance is governed by the angle between the search directions. In particular the  effectiveness of the overall process degrades as the directions approach parallelism.
To ensure robustness against collinearity, we introduce a hybrid framework, Twin-ABB$_{\min}$, which switches to the Adaptive Barzilai--Borwein method when the geometric cooperation becomes ineffective.  Extensive and very promising numerical results evidence that the Twin phase creates favorable initial conditions for subsequent BB-type iterations.
\end{abstract}

\vspace{0.5cm}
 \textbf{Keywords:} Unconstrained optimization, gradient method, Twin method, mutual step method.

\section{Introduction}
\label{sec:intro}
We develop a new gradient method for unconstrained optimization problems
\begin{equation}
\label{eq:general_problem}
\min_{\bx \in \mathbb{R}^n} f(\bx),
\end{equation}
where $f \in C^1$. Some theoretical results of the paper will need the assumption that $\nabla f$ is Lipschitz continuous with constant $L$.
We will also show some additional theoretical results for the strictly convex quadratic optimization problem
\begin{equation}
\label{eq:quadratic_problem}
\min_{\bx \in \mathbb{R}^n} f(\bx) = \tfrac12 \, \bx^{\top} \! A \bx - \bb^{\top} \bx,
\end{equation}
where $A \in \mathbb{R}^{n \times n}$ is symmetric positive definite with eigenvalues $0 < \lambda_1 \le \cdots \le \lambda_n$, and $\bb \in \mathbb{R}^n$.
This problem is of paramount importance in a wide variety of applications, ranging from signal and image processing to machine learning and compressed sensing (see, e.g., \cite{ loris2009accelerating,Serafini:2005, WrightNowakFigueiredo:2009SpaRSA, zanella2013towards, CRISCI2025116729}). Furthermore, the quadratic framework is an essential testing ground for algorithmic strategies that can be generalized to nonlinear, nonconvex, or large-scale optimization scenarios.

Gradient-based methods of the form $\bx_{k+1} = \bx_k - \alpha_k \, \nabla f(\bx_k)$
are popular due to their simplicity, low computational cost per iteration, and minimal storage requirements. Although the Steepest Descent method guarantees global convergence and monotonic decrease of the objective function for a quadratic function, it is known to have slow convergence rates for ill-conditioned problems, as demonstrated by its well-known ``zig--zag'' behavior.
To overcome the limitations of SD while ensuring the simplicity of gradient iterations, significant research has focused on the selection of the stepsize. A breakthrough in this area has been achieved by Barzilai and Borwein (BB) \cite{BarzilaiBorwein:1988}, who propose a spectral stepsize derived from a two-point approximation of the secant equation:
\[
\alpha_k^{\text{BB1}}=\frac{ \bs_{k-1}^\top\bs_{k-1}}{ \bs_{k-1}^{\top} \by_{k-1}}, \qquad \alpha_k^{\text{BB2}}=\frac{ \bs_{k-1}^{\top} \by_{k-1}}{ \by_{k-1}^\top\by_{k-1}},
\]
where $\bs_{k-1}= \bx_{k}- \bx_{k-1}, \; \by_{k-1}= \bg_{k}- \bg_{k-1}$. The BB methods generally offer far better performance compared to the SD method, despite not guaranteeing a monotonic decrease in the objective function. This success led to extensive research on spectral gradient methods, resulting in variants such as the Cyclic BB, Monotone Gradient methods, and Adaptive Barzilai--Borwein (ABB) strategies (see \cite{Dai:2002, Raydan:1997, Zhou:2006}).
In ABB we select, for a chosen $\tau \in (0,1)$,
\[
\alpha_k^{\rm{ABB}} = 
\begin{cases} 
\alpha_k^{\text{BB2}}  & \text{if } \frac{\alpha_k^{\text{BB2}}}{\alpha_k^{\text{BB1}}} < \tau, \\
\alpha_k^{\text{BB1}} & \text{otherwise}.
\end{cases}
\]
This is modified by Frassoldati et al.~\cite{Frassoldati:2008}, introducing the ABB$_{\rm {min}}$ strategy:
\[
\alpha_k^{\text{ABB}_{\rm{min}}} = 
\begin{cases} 
\min \{\alpha_j^{\text{BB2}} \mid j = \max(1, k - M_\alpha), \dots, k\} & \text{if } \frac{\alpha_k^{\text{BB2}}}{\alpha_k^{\text{BB1}}} < \tau, \; \tau \in (0,1) \\
\alpha_k^{\text{BB1}} & \text{otherwise}
\end{cases}
\]
where \( M_\alpha > 0 \) is a prefixed integer for memory usage.
These approaches aim to capture local curvature using information from previous iterations to accelerate convergence.
The development of efficient stepsizes for gradient methods remains an active area of research, driven by the limitations of traditional approaches. A significant and historically effective approach involves alternating stepsize rules to break the classical gradient alignment. For instance, Dai \cite{Dai2003} proposed the Alternate Minimization (AM) gradient method, which improves performance by alternately minimizing the function value and the gradient norm along the search direction. Building on these concepts, recent contributions have proposed new strategies to further enhance convergence. For example, Sun and Liu \cite{sun2020new} introduced steplength rules based on alternating approximations of the inverse eigenvalues of the Hessian matrix $H$ over the two-dimensional subspace spanned by $\bg_k$ and $\bg_{k-1}$. Similarly, Zhang and Sun \cite{zhang2024cyclic} analyzed cyclic gradient methods, highlighting the effectiveness of periodic and alternating stepsizes in unconstrained optimization.
In addition to steplength strategies, acceleration has also been achieved through multi-step approaches, where updates incorporate information from past gradients or iterates \cite{dai-3dp, Nesterov:1983, Polyak:1964, yuan-3d}, and further acceleration has been obtained by researchers focusing on composite directions \cite{DeMagistrisCOAP}.

A related line of development can be found in iterative methods for solving linear systems, which have also evolved significantly. A notable example is the Kaczmarz method, a row-action projection algorithm. Recently, Van Lith, Hansen, and Hochstenbach \cite{VanLith2021} introduced a dual-process framework in the context of Kaczmarz iterations. 

Our work draws inspiration directly from the ``Mutual step'' approach in \cite{VanLith2021}, which we adopt and reinterpret in a gradient-based setting, referring to it as the ``Twin Gradient'' method. The core idea is to run two simultaneous processes that cooperate to accelerate convergence by exploiting the geometric relationship between their paths.
Unlike spectral methods that rely on the history of a single sequence to determine the stepsize, our proposed method evolves two parallel sequences of iterates, denoted by $\{\bx_k\}$ and $\{\bz_k\}$. The main idea is that, although they target the same minimum, the two processes follow distinct paths starting from different initializations. At each step, they cooperate to determine their respective stepsizes via a Twin step principle: the steplengths are chosen not to minimize the function values individually, but to minimize the distance between the two processes at the next iteration.

At iteration $k$, both processes use the steepest descent direction. We define the search directions for the first process $\{\bx_k\}$ and the second process $\{\bz_k\}$ as
\[
\wt\bp_k = -\nabla f(\bx_k) \quad \text{and} \quad \wt\bq_k = -\nabla f(\bz_k).
\]
Unlike methods where stepsizes are computed independently, here the steps are coupled to minimize the mutual distance. This geometric coupling generates a search path, quite different from that of a standard gradient method, which can lead to a very fast decrease in the objective functions. Nevertheless, a serious drawback of the Twin strategy occurs when parallelism between the directions arises, which discourages its use as a stand-alone method.
Thus, we embed the Twin strategy into a globalization framework. Specifically, we propose a hybrid algorithm, Twin-ABB$_{\min}$, which employs the Twin phase to rapidly navigate the search space when the processes are distinct, and switches to ABB$_{\min}$ spectral gradient method \cite{Frassoldati:2008} when the processes become collinear or when the mutual acceleration stagnates.

The rest of this paper is organized as follows: Section~\ref{sec:method} introduces the Twin method. Section~\ref{subsec:global_convergence} proves its global convergence for quadratic functions. In Section~\ref{sec:con_nonquad}, we extend the convergence theory to general functions. Section~\ref{sec:analysis} provides a stepsize analysis. Section~\ref{sec:algo} details the hybrid Twin-ABB$_{\min}$ algorithm and its nonmonotone line search. Finally, numerical experiments and conclusions are presented in Sections~\ref{sec:experiments} and \ref{sec:conclusions}.

\section{The Twin method}
\label{sec:method}
Our proposed method evolves two sequences of iterates, denoted as $\{\bx_k\}$ and $\{\bz_k\}$. At each iteration $k$, given the current points $\bx_k$ and $\bz_k$ with nonzero gradients, the new iterates are generated by taking steps along their respective steepest descent directions $\wt\bp_k$ and $\wt\bq_k$. 
We make the very natural assumption that both gradients are nonzero. We now introduce some notation that will be used in the remainder of this work:
\begin{equation} \label{eq:normalized} 
 \bp_k = \wt\bp_k \, / \, \|\wt\bp_k\|, \qquad  \bq_k = \wt\bq_k \, / \, \|\wt\bq_k\|, \qquad
 \bd_k = \bx_k - \bz_k, \qquad \gamma_k = \bp_k^{\top}\bq_k
 \end{equation}
i.e. the normalized gradients at $\bx_k$ and $\bz_k$, the 
the distance vector between the current iterates, and the cosine  of the angle between the two search directions.
Moreover, We will make the following assumption: 
\begin{assumption}
\label{ass:gamma_bound}
There exists a constant $\bar{\gamma} \in (0,1)$ such that
$|\gamma_k| \le \bar{\gamma}$ \quad for all $k$.
\end{assumption}

Regarding this assumption, we will show how in principle it can be automatically enforced under a Lipschitz condition on $\nabla f(\bx)$ in Section~\ref{sec:nonquadcase} (Proposition \ref{lem:geometric_damping}).

The update rules for the two processes are defined as:
\begin{equation}
\bx_{k+1} = \bx_k + \alpha_k \,  \bp_k, \qquad \bz_{k+1} = \bz_k + \beta_k \,  \bq_k,  
\label{updatexz}
\end{equation}
where $\alpha_k \ge 0$ and $\beta_k \ge 0$ are the stepsizes. 
Rather than computing these steplengths independently, as is customary in standard gradient methods, the key innovation in this paper is now to \emph{simultaneously determine them by solving a joint minimization problem}.
The aim is to find the stepsizes $\alpha_k$ and $\beta_k$ along the normalized directions that minimize the distance between the two processes at the next iteration, subject to nonnegativity constraints:
\begin{equation}
\label{eq:mutual_step_min}
(\alpha_k, \beta_k) = \argmin_{\alpha, \, \beta \, \ge \, 0} \, \tfrac12 \, \| (\bx_k + \alpha \,  \bp_k) - (\bz_k + \beta \,  \bq_k) \|^2.
\end{equation}
This formulation ensures that the distance between the two processes is a monotonically nonincreasing sequence. Indeed, since the trivial solution $(\alpha, \beta) = (0, 0)$ is always feasible and corresponds to the current distance $\|\bd_k\|$, the constrained minimization ensures $\|\bd_{k+1}\| \le \|\bd_k\|$. 
Let us first consider the corresponding unconstrained problem
\begin{equation}
\label{eq:mutual_step_min_unc}
(\wt\alpha_k, \, \wt\beta_k) = \argmin_{\alpha, \, \beta} \, \tfrac12 \, \| (\bx_k + \alpha \,  \bp_k) - (\bz_k + \beta \,  \bq_k) \|^2.
\end{equation}
Define
\begin{equation} \label{eq:Gk}
G_k = [ \bp_k \ \ \  -\bq_k], \qquad M_k = G_k^\top G_k
= \mtxa{cc}{
1 & -\gamma_k \\
-\gamma_k & 1}.
\end{equation}
It is easy to check that the eigenvalues of $M_k$ are $\lambda_1 = 1 - |\gamma_k|$ and $\lambda_2 = 1 + |\gamma_k|$.
Condition $|\gamma_k| \le \gammac < 1$ (from Assumption~\ref{ass:gamma_bound}) 
implies $\lambda_{\min}(M_k) \ge 1 - \gammac > 0$,
so $M_k$ is positive definite with $\|M_k^{-1}\| \le (1 - \gammac)^{-1}$, and for the condition number we have
\[
\kappa(M_k) = \frac{1 + |\gamma_k|}{1 - |\gamma_k|} \le \frac{2}{1 - \gammac}.
\]
The numerical solution to the unconstrained problem \eqref{eq:mutual_step_min_unc} is via the least squares problem
$G_k \,
\text{\scalebox{.6}{$\begin{bmatrix} \wt \alpha \\ \wt \beta \end{bmatrix}$}}
\approx -\bd_k$.
The associated normal equations are the symmetric $2\times2$ linear system
$G_k^{\top} G_k \,
\text{\scalebox{.6}{$\begin{bmatrix} \wt \alpha \\ \wt \beta \end{bmatrix}$}}
= - G_k^{\top} \bd_k$, or written in coordinates
\begin{equation}
\label{eq:system2x2_normalized}
\begin{bmatrix}
1 & -\gamma_k \\
-\gamma_k & 1
\end{bmatrix}
\begin{bmatrix}
\wt \alpha \\ \wt \beta
\end{bmatrix}
=
\begin{bmatrix}
- \bp_k^{\top} \bd_k \\[1mm]
\ph{-}  \bq_k^{\top} \bd_k
\end{bmatrix}.
\end{equation}
The solution of the system \eqref{eq:system2x2_normalized} is
\begin{equation}
\label{eq:explicit_steps_normalized}
\wt \alpha_k = (1 - \gamma_k^2)^{-1}
(-\bp_k^{\top} + \gamma_k \, \bq_k^{\top}) \, \bd_k,
\qquad
\wt \beta_k = (1 - \gamma_k^2)^{-1}
(\bq_k^{\top} - \gamma_k \, \bp_k^{\top}) \, \bd_k.
\end{equation}
If ${\wt \alpha}_k \geq 0$ and ${\wt \beta}_k \geq 0$, the solutions of \eqref{eq:mutual_step_min} and  \eqref{eq:mutual_step_min_unc} coincide  (i.e., $\alpha_k = \wt \alpha_k$ and $\beta_k = \wt \beta_k$).
Otherwise, the solution must lie on the boundary of the feasible region (i.e., $\alpha_k = 0$ or $\beta_k = 0$). Let us define 
\begin{equation} \label{eq:alfabetacappello}
\begin{array}{rll}
  \wh \alpha_k & = \argmin_{\alpha \, \ge \, 0} \ \tfrac12 \, \| (\bx_k + \alpha \,  \bp_k) - \bz_k \|^2 & = \max (-\bp_k^{\top} \bd_k, \, 0), \\[1.5mm]
  \wh \beta_k  & = \argmin_{\beta \, \ge \, 0} \ \tfrac12 \, \| (\bz_k + \beta \,  \bq_k) - \bx_k \|^2 & = \max (\bq_k^{\top} \bd_k, \, 0).
\end{array}
\end{equation}
Then the solution for \eqref{eq:mutual_step_min} is
\begin{align}
(\alpha_k, \beta_k) =
\begin{cases}
(\wt \alpha_k, \, \wt \beta_k)  & \text{if }  \wt \alpha_k > 0 \text{ and } \wt \beta_k > 0, \\[1.5mm]
(\wh \alpha_k, 0)  & \text{otherwise, if } \|(\bx_k + \wh \alpha_k \, \bp_k) - \bz_k\| \le \|\bx_k - (\bz_k + \wh \beta_k \, \bq_k)\|,\\[1.5mm]
(0, \wh \beta_k)  & \text{otherwise, if } \|(\bx_k + \wh \alpha_k \, \bp_k) - \bz_k\| > \|\bx_k - (\bz_k + \wh \beta_k \, \bq_k)\|.
\end{cases}
\label{eq:final_step_selection}
\end{align}
We note that the update formula \eqref{updatexz} is invariant with respect to positive scaling and additive constants of the objective function $f(x)$, as well as translations of the domain. In our method, $(\alpha_k, \beta_k)$ are not derived from a Rayleigh inverse, nor do we attempt to approximate the Hessian. Indeed, the method is based on the idea of bringing research directions closer together, rather than on the estimated curvature of the function. In the choice of steplengths, the use of scaling by a damping factor $\eta_k$ is foreseen to avoid possible overshooting; more about this in Section~\ref{sec:con_nonquad}. 
We designate this iterative scheme, which couples two simultaneous descent paths via the Twin-Step rule, as the Twin Gradient Method. Finally, while the two sequences $\{\bx_k\}$ and $\{\bz_k\}$ are designed to approach the minimizer from distinct geometric directions, to guarantee the maximum decrease, we define the solution at the final iteration as the iterate that achieves the lowest objective function value. 
The complete procedure of the basic algorithm is summarized in the Algorithm~\ref{alg:TwinBasic}, while we will see its extensions in Section~\ref{sec:algo}.

\begin{algorithm}[htb!]
\caption{Twin Gradient Method}
\label{alg:TwinBasic}
\footnotesize
\begin{algorithmic}[1]
    \State \textbf{Input:} $\bx_0, \bz_0 \in \R^n$; ${\sf tol} > 0$; $ {\sf maxit} \in \mathbb{N}$; $\{\eta_k\} \in (0, 1]$
    \For{ $k = 0, \dots, {\sf maxit}$ }
        
        \State Compute normalized gradients $\bp_k$ and $\bq_k$ \eqref{eq:normalized}; \: Compute $(\alpha_k, \beta_k)$ from~\eqref{eq:final_step_selection};
        \State $\bx_{k+1} = \bx_k + \eta_k \, \alpha_k \, \bp_k$, \quad $\bz_{k+1} = \bz_k + \eta_k \, \beta_k \, \bq_k$
        \State {\bf if} {\tt stopping criteria} {\bf return}; \ {\bf end if}
    \EndFor
    \State \textbf{Output:} $\bx^* = \argmin_{\bx \in \{\bx_k, \bz_k\}} f(\bx)$ \Comment{Return the best solution}
\end{algorithmic}

\end{algorithm}

The linear system~\eqref{eq:system2x2_normalized} admits a clear geometric interpretation. The objective function in~\eqref{eq:mutual_step_min} minimizes  the distance between two points moving along the search lines.
Let $\ell_{\bx}$ be the line passing through $\bx_k$ with direction $\bp_k$, and $\ell_{\bz}$ be the line passing through $\bz_k$ with direction $\bq_k$
\[
\ell_{\bx} = \{ \bx_k + \alpha \, \bp_k \mid \alpha \in \mathbb{R} \}, \quad \ell_{\bz} = \{ \bz_k + \beta \, \bq_k \mid \beta \in \mathbb{R} \}.
\]
The steplengths $(\alpha_k, \beta_k)$ identify the points $\bx_k + \alpha_k \, \bp_k$ and $\bz_k + \beta_k \, \bq_k$ on the lines $\ell_{\bx}$ and $\ell_{\bz}$, respectively, such that $\|\bd_{k+1}\|$ is minimized.
By definition of the gradient descent updates, the new distance vector can be written as $\bd_{k+1} = \bd_k + \alpha_k \, \bp_k - \beta_k \, \bq_k$, so that this vector lies in a three-dimensional subspace: $\bd_{k+1} \in \text{span} \{ \bd_k, \bp_k, \bq_k \}$.
As the shortest segment connecting two lines is orthogonal to the direction vectors of both lines, this implies
\begin{equation}
\label{eq:orth_k+1}   
\bp_k^{\top} \, \bd_{k+1} = 0 \quad \text{and} \quad \bq_k^{\top} \, \bd_{k+1} = 0.
\end{equation}
Thus, the linear system~\eqref{eq:system2x2_normalized} is the formulation of the geometric requirement that the residual vector between the two processes must be orthogonal to the search subspace spanned by $\bp_k$ and $\bq_k$.
Intuitively, by minimizing the mutual distance between two paths traversing the opposite walls of a narrow valley, the method forces the iterates towards the central floor of the valley, thereby dampening the typical orthogonal oscillations of steepest descent.
To illustrate the practical effects of this geometric coupling, we present a numerical example.
\begin{example}
  We investigate whether the auxiliary process $\bz_k$ can assist the primary process $\bx_k$ in escaping the slow ``zig--zag'' convergence typical of the Steepest Descent (SD) \cite{Cauchy:1847} method on ill-conditioned problems.
We consider a 3D-extension of a classic 2D quadratic problem from Nocedal and Wright \cite[Sec.~3.3]{NocedalWright2006}, known to be challenging for SD:
\[
f(x_1, x_2,x_3) = \tfrac12 \, (x_1^2 + \zeta \, x_2^2 + \zeta \,x_3^2).
\]
The unique minimizer is $\bx^* = (0,0,0)^{\top}$. We set the starting point for the first process as $\bx_0 = (\zeta,\zeta, 1)^{\top}$, $\zeta=1000$ and $\eta_k= \eta = 0.9$.
This configuration forces the standard SD method into a 3D oscillatory convergence pattern, which, in contrast, does not occur with the Twin update.
The SD method  terminates after reaching the maximum number of iterations ($5000$) without satisfying the stopping criteria 
$
\|\nabla f(\bx_k)\| \le 10^{-6} \cdot \|\nabla f(\bx_0)\|.
$
\, For the Twin method, in contrast, we evaluated $100$ runs with randomly chosen starting poi $\bz_0$; to satisfy the stopping criteria the number of iterations required   was $17$, on average, ranging from a minimum of $4$ to a maximum of $40$ iterations in the worst case.  
\end{example}

\subsection{Global convergence for quadratic functions}
\label{subsec:global_convergence}
Let us analyze the global convergence of the Twin method for quadratic functions with $\eta_k = \eta = 1$.  
The goal of our analysis is to demonstrate that the mutual distance $\bd_k \to \zero$, and that both sequences $\{\bx_k\}$ and $\{\bz_k\}$ converge to the unique minimizer $\bx^*$. We begin by proving that $\bd_k \to \zero$.

A main tool to reach this is a comparison with a modified \textit{one-dimensional} Twin type technique, by adding the constraint $\alpha = \beta$ to \eqref{eq:mutual_step_min}.
We will see in \eqref{eq:BB2_Twin} that the corresponding stepsize is of BB2 appearance (Barzilai--Borwein stepsize 2), but also completely different since it is related to the Twin idea of having two sequences.
For that reason, we use the label ``T2'', and consider
\begin{equation}
\bx_{k+1} = \bx_k +  \beta_k^{\rm{T2}} \,  \bp_k, \qquad \bz_{k+1} = \bz_k +  \beta_k^{\rm{T2}} \,  \bq_k,  
\label{updatexz1}
\end{equation}
with
\begin{align}
 \beta_k^{\rm{T2}} &= \argmin_{\beta \, \ge \, 0} \tfrac12 \, \| (\bx_k + \beta \, \wt \bp_k) - (\bz_k + \beta \, \wt \bq_k) \|^2
 = \argmin_{\beta \, \ge \, 0} \tfrac12 \, \| (I - \beta \, A) \, \bd_k \|^2.
 \label{beta1}
\end{align}
We define
\[
\by_k = \nabla f(\bx_k) - \nabla f(\bz_k) = A \, \bd_k.
\]
The solution of the minimization problem \eqref{beta1} is 
\begin{equation} \label{eq:BB2_Twin}
\beta_k^{\rm{T2}} = \frac{\bd_k^{\top} A \, \bd_k}{\bd_k^{\top} A^2 \, \bd_k} = \frac{\bd_k^{\top} \, \by_k}{\by_k^{\top} \, \by_k}.
\end{equation}
We refer to this approach as the Twin$_{\rm{T2}}$ method. Since $A$ is positive definite, $\bd_k^{\top} A \, \bd_k$ is strictly positive for any $\bd_k \neq 0$, which guaranties $\beta_k^{\rm{T2}} > 0$. Furthermore, the expression in \eqref{eq:BB2_Twin} coincides with the classical BB2 stepsize. In contrast to standard Barzilai--Borwein methods, where $\bx_k - \bx_{k-1}$ plays the role of $\bd_k$, here we have  $\bd_k = \bx_k - \bz_k$. Moreover, the reciprocal steplength admits the Rayleigh quotient representation
\[
(\beta_k^{\rm{T2}})^{-1} = \frac{\bd_k^{\top} A^2 \, \bd_k}{\bd_k^{\top} A \, \bd_k}.
\]
Since $A$ is symmetric positive definite, this can be viewed as a Rayleigh quotient of $A^{1/2} \, \bd_k$, which implies
\[
\lambda_n^{-1} \le \beta_k^{\rm{T2}} \le \lambda_1^{-1}.
\]
The recurrence relation for $\bd_k$ and the minimization problem are identical to the residual update in the classical Minimal Residual (MR) algorithm for symmetric positive definite matrices. Just like \eqref{updatexz}, the update rule \eqref{updatexz1} is invariant under positive scaling and constant additive shifts
of the objective function $f(x)$, as well as under translations of the domain.

We next present a convergence for the Twin$_{\rm{T2}}$.
We stress that although the proof technique of the following result is classical (see, e.g., \cite[p.~118]{Saad2003}), the context is new:
we exploit this results  for $\bd_k = \bx_k -\bz_k$, using the difference of two processes, while the standard application of this result is for $\bx_k - \bx^*$ playing the role of $\bd_k$.

\begin{lemma}
\label{lem:Twin_beta_conv}
Consider problem \eqref{eq:quadratic_problem} with condition number $\kappa(A) = \lambda_n / \lambda_1$.
Consider the sequence $\{\bd_k^{\rm{T2}}\}$ generated by
$\bd_{k+1}^{\rm{T2}} = (I - \beta_k^{\rm{T2}} A)\,\bd_k^{\rm{T2}}$,
where the stepsize is chosen according to \eqref{eq:BB2_Twin}
Then the following inequality holds:
\[
\|\bd_{k+1}^{\rm{T2}}\| \le \frac{\kappa - 1}{\kappa + 1} \cdot \|\bd_k^{\rm{T2}}\|
\]
and therefore $\bd_k^{\rm{T2}} \to \zero$ at least linearly.
\end{lemma}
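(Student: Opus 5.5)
The plan is the classical Minimal Residual argument, with $\bd_k$ in the role of the residual. Write $\bd = \bd_k^{\rm T2}$ and $\by = A\bd$.

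The first step uses the optimality of $\beta_k^{\rm T2}$. The stepsize \eqref{eq:BB2_Twin} is the unconstrained minimizer of $\beta \mapsto \|(I-\beta A)\bd\|^2$. It is positive, so it also solves the constrained problem \eqref{beta1}. The resulting residual $\bd_{k+1}^{\rm T2} = \bd - \beta_k^{\rm T2}\by$ is orthogonal to $\by$. By Pythagoras,
\[
\|\bd_{k+1}^{\rm T2}\|^2 = \|\bd\|^2 - \frac{(\bd^\top A\bd)^2}{\bd^\top A^2\bd}.
\]

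The second step follows directly from optimality. For every $\beta\ge0$,
\[
\|\bd_{k+1}^{\rm T2}\| \le \|(I-\beta A)\bd\| \le \|I-\beta A\|\,\|\bd\| = \max_{i}|1-\beta\lambda_i|\,\|\bd\|.
\]
Here the spectral norm applies because $A$ is symmetric, so $I-\beta A$ is symmetric with eigenvalues $1-\beta\lambda_i$.

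The third step optimizes this bound over $\beta$. The quantity $\max_i|1-\beta\lambda_i|$ is governed only by the extreme eigenvalues $\lambda_1$ and $\lambda_n$. It is minimized when $1-\beta\lambda_1 = -(1-\beta\lambda_n)$, that is, at $\beta^* = 2/(\lambda_1+\lambda_n)$. The minimal value is
\[
\frac{\lambda_n-\lambda_1}{\lambda_n+\lambda_1} = \frac{\kappa-1}{\kappa+1}.
\]
Taking $\beta=\beta^*$ in the second step gives the claimed inequality. Iterating it gives $\|\bd_k^{\rm T2}\| \le \bigl(\tfrac{\kappa-1}{\kappa+1}\bigr)^k\|\bd_0^{\rm T2}\|$, which is linear convergence to $\zero$. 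If $\bd_k^{\rm T2}=\zero$ at some step, every later iterate is zero and the claim is trivial.

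There is essentially no obstacle here. The only points to check are that the constraint $\beta\ge0$ in \eqref{beta1} is harmless, which holds because $\beta^*>0$ is feasible, and that the operator-norm bound is legitimate because $A$ is symmetric. An alternative route combines the Pythagoras identity from the first step with a Kantorovich-type inequality. That route gives a bound of the form $\sqrt{1-1/\kappa}$ rather than the sharper $(\kappa-1)/(\kappa+1)$, so I would use the comparison with the fixed stepsize $\beta^*$ from the second and third steps.
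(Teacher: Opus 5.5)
Your proof is correct and follows the paper's argument: you compare the optimal residual with the one produced by the fixed step $2/(\lambda_1+\lambda_n)$ and bound $\max_i|1-\beta\lambda_i|$ by $(\kappa-1)/(\kappa+1)$; your Pythagoras identity and your check of the constraint $\beta\ge 0$ are harmless extras.

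One correction to your closing aside: the Pythagoras identity combined with the genuine Kantorovich inequality (applied to $A^{1/2}\bd$) gives $(\bd^\top A\bd)^2 \ge \tfrac{4\kappa}{(\kappa+1)^2}\,\|\bd\|^2\,\bd^\top A^2\bd$, which yields exactly the same factor $(\kappa-1)/(\kappa+1)$. The weaker factor $\sqrt{1-1/\kappa}$ comes only from the cruder estimates $\bd^\top A\bd \ge \lambda_1\|\bd\|^2$ and $\bd^\top A^2\bd \le \lambda_n\,\bd^\top A\bd$.
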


\begin{proof}

\hspace*{0.2cm} We provide a proof for completeness.
By definition of $\beta_k^{\rm{T2}}$, for any $\bar{\beta} \in \mathbb{R}$ it holds
\[
\|\bd_{k+1}^{1\text{D}}\| = \min_{\beta} \|(I - \beta \, A) \, \bd_k^{1\text{D}}\|
\le \|(I - \bar{\beta} \, A) \, \bd_k^{1\text{D}}\| \le \|I - \bar{\beta} \, A\| \cdot \|\bd_k^{1\text{D}}\|.
\]
Since $A$ is symmetric positive definite, the eigenvalues of $I - \bar{\beta} \, A$ are $1 - \bar{\beta} \, \lambda_i$, hence
\[
\|I - \bar{\beta} \, A\| = \max_{1 \le i \le n} |1 - \bar{\beta} \, \lambda_i|.
\]
This quantity is minimal for $\bar{\beta} = \frac{2}{\lambda_1 + \lambda_n}$, for which we have
\[
\|I - \bar{\beta} \, A\| = \tfrac{\lambda_n - \lambda_1}{\lambda_n + \lambda_1}
= \tfrac{\kappa - 1}{\kappa + 1}.
\]
\end{proof}
Using this Lemma, we can now prove the following result for the two-parameter Twin method.

\begin{proposition}
\label{cor:Twin_two_conv}
 Consider the sequence of mutual distances $\{\bd_k\}$ generated by the Twin system \eqref{eq:system2x2_normalized}. Then $\bd_k \to 0$ as $k \to \infty$.
\end{proposition}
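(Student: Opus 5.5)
We compare one step of the two-parameter Twin rule with one step of the Twin$_{\rm T2}$ rule, both started from the current pair $(\bx_k,\bz_k)$. We then apply Lemma~\ref{lem:Twin_beta_conv} to a single step. The key observation is that the T2 update is feasible for the minimization problem \eqref{eq:mutual_step_min}, once the stepsizes are rewritten for normalized directions.

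Fix $k$ with $\bd_k\neq\zero$. Set $\bar\beta=\beta_k^{\rm T2}>0$, computed from \eqref{eq:BB2_Twin} with the current $\bd_k$. The T2 step moves $\bx_k$ by $\bar\beta\,\wt\bp_k=\bar\beta\|\wt\bp_k\|\,\bp_k$ and $\bz_k$ by $\bar\beta\,\wt\bq_k=\bar\beta\|\wt\bq_k\|\,\bq_k$. Hence the pair
\[
(\alpha,\beta)=(\bar\beta\|\nabla f(\bx_k)\|,\ \bar\beta\|\nabla f(\bz_k)\|)
\]
is nonnegative and feasible in \eqref{eq:mutual_step_min}. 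It produces exactly the distance vector $\bd_k-\bar\beta A\bd_k$, because $\wt\bp_k-\wt\bq_k=-(A\bx_k-\bb)+(A\bz_k-\bb)=-A\bd_k$. Since $(\alpha_k,\beta_k)$ from \eqref{eq:final_step_selection} is the constrained minimizer, with $\eta_k=1$ we obtain
\[
\|\bd_{k+1}\|\le\|(I-\beta_k^{\rm T2}A)\bd_k\|\le\frac{\kappa-1}{\kappa+1}\,\|\bd_k\|.
\]
The last inequality is the one-step estimate of Lemma~\ref{lem:Twin_beta_conv}. That estimate depends only on the current vector and not on the history, so it applies to $\bd_k$ itself.

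Iterating this bound gives $\|\bd_k\|\le\bigl(\frac{\kappa-1}{\kappa+1}\bigr)^k\|\bd_0\|\to0$, so the convergence is in fact linear. If some $\bd_k=\zero$, then $\bx_k=\bz_k$ and the two processes coincide. In that case $\gamma_k=1$, and we treat it as a degenerate stop rather than a step, consistent with Assumption~\ref{ass:gamma_bound}.

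The one point that needs care is showing that \eqref{eq:final_step_selection} really returns the global minimizer over the nonnegative quadrant. The comparison relies on this, and not only on a stationary point. This holds because the objective is a convex quadratic in $(\alpha,\beta)$. By Assumption~\ref{ass:gamma_bound}, $M_k$ is positive definite, so the objective is strictly convex. The case analysis (interior solution, otherwise the better of the two edge minimizers) then characterizes the constrained minimum correctly. I would verify this briefly via the KKT conditions.
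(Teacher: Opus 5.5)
Your proposal is correct and follows the same route as the paper. Both compare one Twin step with the Twin$_{\rm T2}$ step taken from the same pair $(\bx_k,\bz_k)$, then apply the one-step contraction of Lemma~\ref{lem:Twin_beta_conv} to get $\|\bd_{k+1}\|\le\frac{\kappa-1}{\kappa+1}\|\bd_k\|$. You also rescale the T2 step to the pair $(\bar\beta\|\wt\bp_k\|,\bar\beta\|\wt\bq_k\|)$ and note that it is nonnegative, hence feasible for the constrained problem \eqref{eq:mutual_step_min}; this tightens the paper's argument, which states the comparison for the unconstrained problem \eqref{eq:mutual_step_min_unc} even though the method uses the constrained minimizer.
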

\begin{proof}
\hspace*{0.2cm} Let $\bd_{k+1}$ and $\bd_{k+1}^{\rm{T2}}$ denote the distance vectors generated by the  Twin method and the one-dimensional step $\beta_k^{\rm{T2}}$ (Lemma~\ref{lem:Twin_beta_conv}), respectively. 
Since the 1D search line is a subset of the 2D plane spanned by the search directions, the minimum distance \eqref{eq:mutual_step_min_unc} is upper-bounded by the 1D update. Furthermore, according to Lemma~\ref{lem:Twin_beta_conv}, this one-dimensional update guarantees a contraction by a factor of $\frac{\kappa-1}{\kappa+1} < 1$. Combining these properties, we observe that at every single iteration, the Twin step $\bd_{k+1}$ performs at least as well as the restricted step $\bd_{k+1}^{\rm{T2}}$, yielding the single step-by-step bound:
\[\|\bd_{k+1}\| \le \|\bd_{k+1}^{\rm{T2}}\| \le \tfrac{\kappa-1}{\kappa+1} \cdot \|\bd_k\|,\]
and therefore the thesis follows.
\end{proof}

We now present a convergence result for the quadratic case, which requires following assumption.
\begin{assumption}
\label{assum:infinite_sum}
$\ \sum_{k=0}^\infty \alpha_k = \infty$ \ and \ $\sum_{k=0}^\infty  \beta_k = \infty$. 
\end{assumption}
\noindent Assumption~\ref{assum:infinite_sum} is a common condition in various contexts; see, e.g., \cite[p.~32]{Bertsekas:1999} and \cite[p.~249]{bottou2018optimization}.
It is a sufficient condition to prevent the algorithm from stalling prematurely due to excessively rapid stepsize decay (e.g., $\bar \alpha_k \propto 1/k^2$).
Here we use it as a sufficient condition to show global convergence for quadratic problems. In the following result, we exploit the Twin system \eqref{eq:system2x2_normalized} for the convergence. 
When $\bd_k \to \zero$, we might still have that both sequences diverge to infinity; however, fortunately, the following result shows that convergence to the minimizer is guaranteed. Thanks to the previous corollary, we can prove the convergence of our method. It adapts and extents a geometric property originally established for the Twin Kaczmarz method \cite[Cor.~4.2]{VanLith2021} to our gradient-based framework. 
\begin{proposition}[{Adaptation and extension of \cite[Cor.~4.2]{VanLith2021}}]
\label{thm:asym_orth_conv}
Assume that  Assumption~\ref{ass:gamma_bound} holds.
If $\bd_k \to \zero$, then 
\vspace*{-0.1cm}
\begin{equation}
\label{alfabetato0}
\alpha_k  \to 0, \quad \beta_k \to 0.
\end{equation}
Specifically, if $\|\bd_k\| = \calo(\eps)$, then $ \alpha_k = \calo(\eps)$ and $ \beta_k = \calo(\eps)$.
Finally, both processes converge to the unique minimizer:
\begin{equation}
\lim_{k \to \infty} \bx_k = \lim_{k \to \infty} \bz_k = \bx^*.
\label{xzconvergence}
\end{equation}
\end{proposition}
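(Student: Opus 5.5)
The plan has three stages: bound the steplengths by $\|\bd_k\|$, show that $\{f(\bx_k)\}$ and $\{f(\bz_k)\}$ converge, and then use Assumption~\ref{assum:infinite_sum} to identify the common limit as $\bx^*$.

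\textbf{Step 1: steplength bounds.} I will read these off the explicit formulas \eqref{eq:explicit_steps_normalized} and \eqref{eq:alfabetacappello}.
\begin{itemize}
\item Interior case \eqref{eq:final_step_selection}: by Cauchy--Schwarz with $\|\bp_k\|=\|\bq_k\|=1$,
\[
|\wt\alpha_k| \le \frac{(1+|\gamma_k|)\,\|\bd_k\|}{1-\gamma_k^2} = \frac{\|\bd_k\|}{1-|\gamma_k|} \le \frac{\|\bd_k\|}{1-\gammac},
\]
and the same bound holds for $\wt\beta_k$. This is where Assumption~\ref{ass:gamma_bound} is used.
\item Boundary cases: $\wh\alpha_k \le |\bp_k^\top\bd_k| \le \|\bd_k\|$ and $\wh\beta_k \le \|\bd_k\|$.
\end{itemize}
Hence $0\le \alpha_k,\beta_k \le (1-\gammac)^{-1}\|\bd_k\|$. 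This gives \eqref{alfabetato0} and the $\calo(\eps)$ statement at once.

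\textbf{Step 2: the function values converge.} For the quadratic \eqref{eq:quadratic_problem}, with $\bg_k=\nabla f(\bx_k)$ and $\bp_k=-\bg_k/\|\bg_k\|$,
\[
f(\bx_{k+1}) = f(\bx_k) - \alpha_k\|\bg_k\| + \tfrac12\alpha_k^2\,\bp_k^\top A\bp_k \le f(\bx_k) - \alpha_k\|\bg_k\| + \tfrac{\lambda_n}{2}\alpha_k^2 .
\]
By Proposition~\ref{cor:Twin_two_conv}, $\|\bd_k\|$ decays at least geometrically, so Step 1 gives $\sum_k\alpha_k^2<\infty$. Thus $f(\bx_k)$ is a quasi-monotone sequence with summable perturbations. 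Since $f$ is bounded below, $\{f(\bx_k)\}$ converges and
\[
\sum_k \alpha_k\|\bg_k\| < \infty .
\]
The same argument applies to $\{\bz_k\}$ with $\beta_k$.

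\textbf{Step 3: identify the limit.} Under Assumption~\ref{assum:infinite_sum}, $\sum_k\alpha_k\|\bg_k\|<\infty$ together with $\sum_k\alpha_k=\infty$ forces $\liminf_k\|\bg_k\|=0$. So there is a subsequence with $f(\bx_{k_j})\to f(\bx^*)$. Because the whole sequence $f(\bx_k)$ converges, $f(\bx_k)\to f(\bx^*)$. Strong convexity gives
\[
\tfrac{\lambda_1}{2}\|\bx_k-\bx^*\|^2 \le f(\bx_k)-f(\bx^*)\to 0 ,
\]
so $\bx_k\to\bx^*$. Finally $\bz_k=\bx_k-\bd_k\to\bx^*$, which is \eqref{xzconvergence}.

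\textbf{Main obstacle.} The delicate point is Step 3, and specifically its reliance on Assumption~\ref{assum:infinite_sum}. Step 1 combined with the geometric decay from Proposition~\ref{cor:Twin_two_conv} actually yields $\sum_k\alpha_k<\infty$. That is in tension with Assumption~\ref{assum:infinite_sum}: as stated, the hypotheses would then make the convergence claim hold only vacuously.

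Without that assumption, summability still shows that $\{\bx_k\}$ is Cauchy and so converges to some $\bar\bx$, with $\bz_k\to\bar\bx$ as well. Identifying $\bar\bx=\bx^*$ is then the real difficulty. I would first try the orthogonality relations \eqref{eq:orth_k+1}, $\bp_k^\top\bd_{k+1}=\bq_k^\top\bd_{k+1}=0$, and ask whether they prevent the normalized gradients from stabilising at a nonzero limit direction while $\bd_k\to\zero$. I expect this to require an additional nondegeneracy argument that is not available from the results stated so far.
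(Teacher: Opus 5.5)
Your Steps 1--3 are valid under the hypotheses the paper's proof actually uses, namely Assumption~\ref{ass:gamma_bound} together with Assumption~\ref{assum:infinite_sum}. Step~1 is essentially the paper's argument. The paper bounds $\|[\alpha_k,\beta_k]^\top\|$ by $\max(\sqrt{2}(1-\gammac)^{-1},1)\,\|\bd_k\|$ via $\|M_k^{-1}\|$, and your componentwise Cauchy--Schwarz bound is the one the paper proves later in \eqref{eq:alfabetabnd}. For the final claim the paper simply cites Bertsekas' diminishing-stepsize theorem under Assumption~\ref{assum:infinite_sum}. You instead give a self-contained descent argument for the quadratic, controlling the second-order term $\tfrac{\lambda_n}{2}\alpha_k^2$ by summability. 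That is the more careful route, because with unit directions $\bp_k$ this term is not automatically dominated by $\alpha_k\|\bg_k\|$ near the solution. Your remark about vacuity is also correct, and it applies verbatim to the paper's proof. By Step~1 and Proposition~\ref{cor:Twin_two_conv}, $\alpha_k,\beta_k\le(1-\gammac)^{-1}\bigl(\tfrac{\kappa-1}{\kappa+1}\bigr)^k\|\bd_0\|$ are summable. So Assumption~\ref{assum:infinite_sum} can never hold along a run satisfying Assumption~\ref{ass:gamma_bound}.

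Where your proposal falls short is the last paragraph. The statement as printed assumes only Assumption~\ref{ass:gamma_bound}. Under that reading you leave \eqref{xzconvergence} open, claiming that identifying $\bar{\bx}=\bx^*$ needs a nondegeneracy argument that is not available. It is available: Assumption~\ref{ass:gamma_bound} \emph{is} the nondegeneracy condition, and the orthogonality relations \eqref{eq:orth_k+1} are not needed. Once summability gives $\bx_k\to\bar{\bx}$ and $\bz_k\to\bar{\bx}$, suppose $\nabla f(\bar{\bx})\neq\zero$. By continuity of $\nabla f$, both $\bp_k$ and $\bq_k$ converge to $-\nabla f(\bar{\bx})/\|\nabla f(\bar{\bx})\|$. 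Then $\gamma_k=\bp_k^\top\bq_k\to1$, contradicting $|\gamma_k|\le\gammac<1$. Hence $\nabla f(\bar{\bx})=\zero$, i.e.\ $\bar{\bx}=\bx^*$ by strict convexity. This is exactly the argument of Proposition~\ref{thm:global_conv}. Your summability observation supplies the convergence of $\{\bx_k\}$ and $\{\bz_k\}$ that Proposition~\ref{thm:global_conv} asserts without justification. Together they prove the statement non-vacuously, without your Steps~2--3 and without Assumption~\ref{assum:infinite_sum}.
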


\begin{proof}
\hspace*{0.2cm}
Note that 
$
\|[\wt \alpha_k, \, \wt \beta_k]^{\top} \| \le \sqrt{2} \ \|M_k^{-1}\| \cdot \|\bd_k\|,
$
$ \wh\alpha_k \le \|\bd_k\|$
and
$\wh\beta_k \le \|\bd_k\|$.
Since $\|M_k^{-1}\| \le (1 - \gammac)^{-1}$, and because of 
the selection rule \eqref{eq:final_step_selection}, if we set $\bv_k = [\alpha_k, \beta_k]^{\top}$, one has
\[
\|\bv_k\| \le \mu \, \|\bd_k\|,
\]
where $\mu = \max \big( \sqrt{2} \, (1 - \gammac)^{-1}, \, 1 \big)$. Consequently, if $\bd_k \to \zero$, then $\|\bv_k\| \to \zero$ and \eqref{alfabetato0} trivially follows.
In addition, if  $\|\bd_k\| = \calo(\eps)$, then $\|\bv_k\| = \calo(\eps)$, which in turn yields $\alpha_k = \calo(\eps)$ and $\beta_k = \calo(\eps)$.
Under Assumption~\ref{assum:infinite_sum}, the hypotheses of \cite[Prop.~1.2.3]{Bertsekas:1999} are satisfied and \eqref{xzconvergence} holds. 
\end{proof}

\subsection{Global convergence for general functions}
\label{sec:con_nonquad}
Having demonstrated the theoretical convergence for quadratic functions in Section~\ref{subsec:global_convergence}, we now extend our analysis to the general unconstrained optimization problem \eqref{eq:general_problem}.
We can now demonstrate that the processes converge to the same minimizer for general functions. 

\begin{proposition}
\label{thm:global_conv}
Consider Problem \eqref{eq:general_problem}. Let $\{\bx_k\},\;\{\bz_k\}$ be the sequences generated by the Twin method. Suppose that Assumption~\ref{ass:gamma_bound} holds. If the mutual distance $\bd_k \to \zero$, then both processes converge to the unique global minimizer $\bx^*$.
\end{proposition}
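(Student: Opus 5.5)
The plan is to reproduce the two-stage structure of Proposition~\ref{thm:asym_orth_conv}. First I control the stepsizes through $\|\bd_k\|$. Then I use a limiting argument to show that any common limit point of the two sequences is stationary. Finally I identify that stationary point with $\bx^*$.

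For the first stage, the bound from the proof of Proposition~\ref{thm:asym_orth_conv} never used the quadratic structure. It relied only on the explicit formulas \eqref{eq:explicit_steps_normalized} and \eqref{eq:alfabetacappello}, on $\|M_k^{-1}\|\le(1-\gammac)^{-1}$ from Assumption~\ref{ass:gamma_bound}, and on the selection rule \eqref{eq:final_step_selection}. So I would reuse it verbatim: $\|[\alpha_k,\beta_k]^\top\|\le\mu\|\bd_k\|$ with $\mu=\max(\sqrt2(1-\gammac)^{-1},1)$. Hence $\bd_k\to\zero$ gives $\alpha_k,\beta_k\to0$. The damping factor $\eta_k\le1$ only helps. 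It follows that $\|\bx_{k+1}-\bx_k\|\to0$ and $\|\bz_{k+1}-\bz_k\|\to0$, and since $\bd_k\to\zero$, the two sequences share the same set of limit points.

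The second stage is the identification of the limit. Since $f$ is general, ``unique global minimizer'' must be read under standing hypotheses: I would assume $f$ has a unique stationary point $\bx^*$, for instance $f$ strictly convex and coercive. I would also assume either bounded level sets or the condition $\sum\alpha_k=\sum\beta_k=\infty$ of Assumption~\ref{assum:infinite_sum}, together with Lipschitz continuity of $\nabla f$. The plan is then to invoke the general gradient-method convergence result with diminishing steps, \cite[Prop.~1.2.3]{Bertsekas:1999}, applied separately to each process. Its hypotheses are:
\begin{itemize}
\item the direction $\bp_k=-\nabla f(\bx_k)/\|\nabla f(\bx_k)\|$ is gradient related, which holds because $\nabla f(\bx_k)^\top\bp_k=-\|\nabla f(\bx_k)\|$;
\item the steps satisfy $\eta_k\alpha_k\to0$ and $\sum\eta_k\alpha_k=\infty$.
\end{itemize}
Granting these, the result yields $\nabla f(\bx_k)\to\zero$, or at least that every limit point is stationary, and likewise for $\bz_k$. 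I would adapt the standard descent-lemma estimate $f(\bx_{k+1})\le f(\bx_k)-\eta_k\alpha_k\|\nabla f(\bx_k)\|+\tfrac L2\eta_k^2\alpha_k^2$. One subtlety is that the normalization means the step length in $\bx$ is $\eta_k\alpha_k$ itself rather than $\eta_k\alpha_k\|\nabla f\|$. Summing this inequality with $\alpha_k\to0$ and $\sum\alpha_k^2$ controlled gives $\liminf\|\nabla f(\bx_k)\|=0$. The usual Bertsekas argument, combined with $\|\bx_{k+1}-\bx_k\|\to0$, then upgrades this to $\lim$.

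The main obstacle is this descent step. It needs $\sum\alpha_k^2<\infty$, or some substitute, and this does not follow from $\alpha_k\to0$ alone. I would try to obtain it from the contraction of $\|\bd_k\|$: if $\|\bd_k\|$ decays summably, say linearly as in Proposition~\ref{cor:Twin_two_conv}, then $\alpha_k\le\mu\|\bd_k\|$ is square-summable. If that is unavailable, I would instead argue by contradiction. Suppose $\|\nabla f(\bx_k)\|\ge\epsilon$ on a tail. Then the quadratic term is eventually dominated, since $\alpha_k\to0$, so $f(\bx_{k+1})\le f(\bx_k)-\tfrac12\eta_k\alpha_k\epsilon$. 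With $\sum\eta_k\alpha_k=\infty$ this forces $f(\bx_k)\to-\infty$, contradicting boundedness below. This gives $\liminf\|\nabla f(\bx_k)\|=0$ without needing square summability.

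Once every limit point of $\{\bx_k\}$ and of $\{\bz_k\}$ is stationary, uniqueness of the stationary point gives a single limit point. Boundedness, from the level sets, then yields $\bx_k\to\bx^*$, and $\bz_k=\bx_k-\bd_k\to\bx^*$ as well.
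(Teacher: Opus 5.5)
Your second stage has a genuine gap. What you actually establish is $\liminf_k\|\nabla f(\bx_k)\|=0$, and your tail-contradiction argument for this is fine. Your last paragraph, however, needs that \emph{every} limit point is stationary, and the ``usual Bertsekas upgrade'' does not carry over to normalized steps.

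The diminishing-stepsize theorem you cite assumes the direction norm is bounded by a multiple of $\|\nabla f(\bx_k)\|$. That fails for the unit directions $\bp_k$ once the gradient is small. More importantly, its $\liminf\Rightarrow\lim$ step relies on $f(\bx_k)$ being eventually nonincreasing, hence convergent. Here the descent inequality $f(\bx_{k+1})\le f(\bx_k)-\eta_k\alpha_k\bigl(\|\nabla f(\bx_k)\|-\tfrac L2\eta_k\alpha_k\bigr)$ does not exclude an increase whenever $\|\nabla f(\bx_k)\|<\tfrac L2\eta_k\alpha_k$. So $f(\bx_k)$ need not converge, and the excursion argument fails unless $\sum\alpha_k^2<\infty$.

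Your proposed source for that summability is linear decay of $\|\bd_k\|$, i.e.\ Proposition~\ref{cor:Twin_two_conv}, which holds only for quadratics. The same non-monotonicity undermines ``boundedness from the level sets'', since nothing keeps the iterates in $\{\bx: f(\bx)\le f(\bx_0)\}$. Along the way you also imported Lipschitz continuity of $\nabla f$, $\sum\eta_k\alpha_k=\infty$ and boundedness below, none of which the statement provides.

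The missing idea is that Assumption~\ref{ass:gamma_bound} is not merely a device for bounding stepsizes. Together with $\bd_k\to\zero$ it rules out non-stationary limit points directly, and this is the paper's whole argument. You already observed that the two sequences share their limit points, which is exactly the input needed.

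If $\bx_{k_j}\to\bx_\infty$, then $\bz_{k_j}=\bx_{k_j}-\bd_{k_j}\to\bx_\infty$. If moreover $\nabla f(\bx_\infty)\ne\zero$, continuity of $\nabla f$ gives $\bp_{k_j},\bq_{k_j}\to-\nabla f(\bx_\infty)/\|\nabla f(\bx_\infty)\|$. Hence $\gamma_{k_j}\to1$, contradicting $|\gamma_k|\le\gammac<1$. So every common limit point is stationary and, by strict convexity, equals $\bx^*$; this uses only $f\in C^1$ and no descent estimate.

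The paper simply asserts that the two sequences converge. Existence of limit points, i.e.\ boundedness, must be supplied separately, but once it is, this angle argument completes the proof.
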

\begin{proof}
\hspace*{0.2cm}
 From the hypotheses, the two sequences must converge to a common limit point, denoted by $\bx_\infty$.
Now assume that $\bx_\infty \ne \bx^*$. Since the problem is strictly convex, this implies that
\[
\nabla f(\bx_\infty) \ne \zero.
\]
By continuity, the normalized search directions $\bp_k$ and $\bq_k$ would both converge to the same normalized vector direction:
\[
\lim_{k \to \infty} \, \bp_k = \lim_{k \to \infty} \, \bq_k = -\nabla f(\bx_\infty) \, / \, \|\nabla f(\bx_\infty)\|.
\]
This asymptotic alignment implies that:
\[
\lim_{k \to \infty} |\gamma_k| = \lim_{k \to \infty} |\bp_k^{\top} \bq_k| = 1.
\]
This contradicts the condition that $|\gamma_k|$ is uniformly bounded by $\wh{\gamma} < 1$. Therefore, the assumption $\bx_\infty \ne \bx^*$ is false, and we conclude that $\bx_\infty = \bx^*$.
\end{proof}

Proposition~\ref{thm:global_conv} proves the global convergence of the Twin method based on Assumption ~\ref{ass:gamma_bound}. We now show under which conditions the assumption can be forced to automatically hold. To this end, we first define the damped updates as
\begin{equation}
\label{eq:damped_updates}
\bx_{k+1}(\eta) = \bx_k + \eta \, \alpha_k \, \bp \quad \text{and} \quad \bz_{k+1}(\eta) = \bz_k + \eta \, \beta_k \, \bq_k,
\end{equation}
where $\alpha_k$ and $\beta_k$ are the stepsizes given by \eqref{eq:explicit_steps_normalized}, and $\eta \in (0,1]$ is the damping factor. Furthermore, let $\gamma_{k+1}(\eta) = \bp_{k+1}(\eta)^{\top} \bq_{k+1}(\eta)$ denote the cosine of the angle between the updated search directions, where $\bp_{k+1}(\eta)$ and $\bq_{k+1}(\eta)$ are obtained by normalizing the gradients $\wt \bp_{k+1}(\eta) = -\nabla f(\bx_{k+1}(\eta))$ and $\wt \bq_{k+1}(\eta) = -\nabla f(\bz_{k+1}(\eta))$. Before proceeding, it is important to verify that introducing a damping factor $\eta$ preserves the monotonic decrease of the mutual distance.
\begin{lemma}
\label{prop:damping_contraction}
For any damping factor $\eta \in (0, 1)$ the damped updates \eqref{eq:damped_updates} satisfy
\[\|\bx_{k+1}(\eta) - \bz_{k+1}(\eta)\| < \|\bx_k - \bz_k\|.\]
\end{lemma}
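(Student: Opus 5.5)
The plan is to reduce the claim to a one-dimensional convex quadratic in the damping parameter. Set $\bv_k = \alpha_k \bp_k - \beta_k \bq_k$, so that by \eqref{eq:damped_updates}
\[
\bd_{k+1}(\eta) = \bx_{k+1}(\eta) - \bz_{k+1}(\eta) = \bd_k + \eta\, \bv_k .
\]
Consider $\varphi(t) = \|\bd_k + t\,\bv_k\|^2 = \|\bd_k\|^2 + 2t\,\bd_k^{\top}\bv_k + t^2\|\bv_k\|^2$. The key observation is that $t=1$ minimizes $\varphi$ over $t \ge 0$. For every $t \ge 0$ the pair $(t\alpha_k, t\beta_k)$ is again a candidate for \eqref{eq:mutual_step_min}, since it is feasible. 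It is also a candidate for \eqref{eq:mutual_step_min_unc}, where $t$ may be any real. Optimality of $(\alpha_k,\beta_k)$ therefore gives $\varphi(1) \le \varphi(t)$.

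When $\bv_k \ne \zero$, $\varphi$ is a strictly convex parabola whose minimizer over $t \ge 0$ is $t=1$. Since $1$ is an interior point, the first-order condition $\varphi'(1)=0$ holds, that is, $\bd_k^{\top}\bv_k = -\|\bv_k\|^2$. In the unconstrained case this can also be read directly from the orthogonality relations \eqref{eq:orth_k+1}, because $\bd_{k+1}^{\top}\bv_k = 0$. Substituting gives
\[
\|\bd_{k+1}(\eta)\|^2 = \|\bd_k\|^2 - (2\eta - \eta^2)\,\|\bv_k\|^2 ,
\]
and $2\eta - \eta^2 = \eta(2-\eta) > 0$ for $\eta \in (0,1)$. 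The strict inequality follows as soon as $\bv_k \ne \zero$. (The same identity also covers $\eta = 1$, which recovers $\|\bd_{k+1}\| \le \|\bd_k\|$.)

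The step I expect to be the main obstacle is showing $\bv_k \ne \zero$, because the strict inequality genuinely fails when $\bv_k = \zero$. By Assumption~\ref{ass:gamma_bound}, $G_k$ has full column rank, so $\bv_k = G_k[\alpha_k,\beta_k]^{\top} = \zero$ forces $\alpha_k = \beta_k = 0$. In the unconstrained formula \eqref{eq:explicit_steps_normalized}, with $M_k$ invertible, this happens exactly when $G_k^{\top}\bd_k = \zero$, i.e.\ when $\bd_k$ is orthogonal to both $\bp_k$ and $\bq_k$. In the constrained selection \eqref{eq:final_step_selection} it happens when $\bp_k^{\top}\bd_k \ge 0$ and $\bq_k^{\top}\bd_k \le 0$. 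In either situation the processes are not moved at all and the distance stays unchanged.

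I would therefore state explicitly that the lemma is meant for a non-stationary Twin step, $(\alpha_k,\beta_k) \ne (0,0)$, or equivalently $G_k^{\top}\bd_k \ne \zero$ in the unconstrained setting. In the degenerate case the claim holds with equality rather than strict inequality. Under that nondegeneracy condition the computation above completes the proof.
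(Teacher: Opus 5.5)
Your proof is correct, but it rests on a different property of the undamped step than the paper's proof does.

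The paper only compares the Twin step with the zero step. From $\|\bd_k+\bv_k\|<\|\bd_k\|$, which it asserts ``assuming a nonzero update'', it gets $\|\bv_k\|^2+2\,\bd_k^{\top}\bv_k<0$. Then $\eta^2<\eta$ gives $\eta^2\|\bv_k\|^2+2\eta\,\bd_k^{\top}\bv_k<\eta\,(\|\bv_k\|^2+2\,\bd_k^{\top}\bv_k)<0$. This is just convexity of your $\varphi$ on $[0,1]$.

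You instead use that $t=1$ is optimal along the whole ray $t\,(\alpha_k,\beta_k)$, $t\ge 0$. This yields the stationarity identity $\bd_k^{\top}\bv_k=-\|\bv_k\|^2$, i.e.\ $\bd_{k+1}\perp\bv_k$. The identity holds even when a nonnegativity constraint is active and \eqref{eq:orth_k+1} fails. It gives the exact formula $\|\bd_{k+1}(\eta)\|^2=\|\bd_k\|^2-\eta(2-\eta)\|\bv_k\|^2$. That formula provides:
\begin{itemize}
\item a quantitative decrease;
\item strictness also at $\eta=1$, indeed for all $\eta\in(0,2)$;
\item a proof of the strict undamped decrease that the paper simply takes for granted.
\end{itemize}

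The paper's argument concludes less but also assumes less. It needs no optimality along the ray. It works for any step with $\bv_k\neq\zero$ that merely does not increase the distance: $\|\bd_k+\bv_k\|\le\|\bd_k\|$ already suffices, because $\eta^2<\eta$ supplies the strictness.

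Your explicit treatment of the degenerate case $(\alpha_k,\beta_k)=(0,0)$ makes the paper's ``nonzero update'' proviso precise. In that case the lemma's strict inequality genuinely fails.
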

\begin{proof}
\hspace*{0.2cm} By the minimization property of the  Twin step \eqref{eq:mutual_step_min}, assuming a nonzero update, we know
\[
\|\bx_{k+1} - \bz_{k+1}\| < \|\bx_k - \bz_k\|.
\]
Therefore, in view of
\[
\| \bx_{k+1} - \bz_{k+1}\|^2 = \| \bx_k - \bz_k\|^2 + \| \alpha_k \, \bp_k - \beta_k \, \bq_k \|^2 +2 \, (\bx_k - \bz_k)^\top \, (\alpha_k \, \bp_k - \beta_k \, \bq_k),
\]
we have
\[
\|\alpha_k \, \bp_k - \beta_k \, \bq_k\|^2 + 2 \, (\bx_k - \bz_k)^{\top} (\alpha_k \, \bp_k - \beta_k \, \bq_k) < 0.
\]
Since
\[
\| \bx_{k+1}(\eta) - \bz_{k+1}(\eta)\|^2 = \| \bx_k - \bz_k\|^2 + \eta^2 \, \| \alpha_k \, \bp_k - \beta_k \, \bq_k \|^2 +2 \eta \, (x_k - z_k)^\top \, (\alpha_k \, \bp_k - \beta_k \, \bq_k ),
\]
and, because $0 < \eta \leq 1$,
\[
\begin{array}{ll}
 \eta^2 \, \| \alpha_k \, \bp_k - \beta_k \, \bq_k \|^2 +2 \eta \, (\bx_k - \bz_k)^\top \, (\alpha_k \, \bp_k - \beta_k \, \bq_k ) \\[1.5mm]
\ph{MMMMMMMM} < \eta \, \| \alpha_k \, \bp_k - \beta_k \, \bq_k \|^2 +2 \eta \, (\bx_k - \bz_k)^\top \, (\alpha_k \, \bp_k - \beta_k \, \bq_k )<0,
\end{array}
\]
and the result follows.
\end{proof}

Recall that Proposition~\ref{thm:global_conv} relies on Assumption~\ref{ass:gamma_bound}, which uniformly bounds the gradient alignment $|\gamma_k|$ away from $1$. Rather than imposing this globally as a priori hypothesis, we now demonstrate that it can be dynamically enforced at every iteration by employing a damping factor $\eta_k$ for the steplengths. 
The following result is quite technical; we will illustrate it with an example later in Table~\ref{tab:comparison}.

\begin{proposition}
\label{lem:geometric_damping}
Assume that the gradient $\nabla f$ is Lipschitz continuous with constant $L > 0$. Suppose that $|\gamma_0| < 1$. Then, if we choose the sequence of damping factors
\begin{equation}
\label{eq:epsk}
    \eta_k := \min \Big\{\frac{\eps_{k+1}}{2 \, L} \, \Big( \frac{\alpha_k}{\|\wt \bp_k\|} + \frac{\beta_k}{\|\wt \bq_k\|} \Big)^{-1}, \, \hat\gamma \, \Big\}
\end{equation}
with $\eps_k = (1 - \gamma_0) \, \big(\tfrac12\big)^{k+1}$ and $\hat\gamma := \tfrac12 \, (1+\gamma_0) \in (0,1)$, for the sequence generated by Algorithm \ref{alg:TwinBasic},  Assumption \ref{ass:gamma_bound} holds for $\gammac = \hat\gamma.$

\end{proposition}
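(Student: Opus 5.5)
The plan is a telescoping perturbation argument. The damping factors are chosen so that the change in $\gamma$ from step $k$ to step $k+1$ is at most $\eps_{k+1}$. The total drift $\sum_k \eps_{k+1}$ is then strictly less than $1-\gamma_0$, so $\gamma_k$ never gets close to $1$. In the bound I will use $\gamma_0 \ge 0$ (or $|\gamma_0|$ in place of $\gamma_0$) so that $\hat\gamma\in(0,1)$. I will also control $|\gamma_k|$ from both sides, i.e.\ keep $\gamma_k$ away from $-1$ as well; this needs the same telescoping bound but measured from $-\gamma_0$. That part is the subtle point discussed at the end.

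\textbf{Step 1 (one-step perturbation of the normalized gradients).} Write $\bg(\bx)=\nabla f(\bx)$. The damped step displaces $\bx_k$ by $\eta_k\alpha_k\bp_k$, so Lipschitz continuity gives $\|\bg(\bx_{k+1})-\bg(\bx_k)\|\le L\eta_k\alpha_k$. I then use the elementary inequality $\|\bu/\|\bu\|-\bv/\|\bv\|\|\le 2\|\bu-\bv\|/\|\bv\|$ to obtain
\[
\|\bp_{k+1}-\bp_k\|\le \frac{2L\eta_k\alpha_k}{\|\wt\bp_k\|}, \qquad \|\bq_{k+1}-\bq_k\|\le \frac{2L\eta_k\beta_k}{\|\wt\bq_k\|}.
\]

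\textbf{Step 2 (perturbation of $\gamma$).} Since all vectors involved are unit vectors,
\[
|\gamma_{k+1}-\gamma_k|\le |(\bp_{k+1}-\bp_k)^\top\bq_{k+1}|+|\bp_k^\top(\bq_{k+1}-\bq_k)|\le \|\bp_{k+1}-\bp_k\|+\|\bq_{k+1}-\bq_k\|.
\]
Combining with Step 1, the right-hand side is at most $2L\eta_k\big(\alpha_k/\|\wt\bp_k\|+\beta_k/\|\wt\bq_k\|\big)$, which by the choice \eqref{eq:epsk} of $\eta_k$ is at most $\eps_{k+1}$. The factor $2$ from Step 1 and the $1/(2L)$ in \eqref{eq:epsk} will need reconciling: either the Lipschitz-type normalization estimate can be sharpened to constant $1$ for unit vectors near each other, or one absorbs the constant by shifting the index of $\eps$. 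The geometric sum leaves enough slack for either fix. The cap $\eta_k\le\hat\gamma<1$ also keeps Lemma~\ref{prop:damping_contraction} applicable, so the mutual distance keeps decreasing.

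\textbf{Step 3 (telescoping).} By induction,
\[
|\gamma_k-\gamma_0|\le\sum_{j\ge1}\eps_j=(1-\gamma_0)\sum_{j\ge1}(1/2)^{j+1}=\tfrac12(1-\gamma_0).
\]
Hence $\gamma_k\le\gamma_0+\tfrac12(1-\gamma_0)=\hat\gamma$. The same bound gives $\gamma_k\ge\gamma_0-\tfrac12(1-\gamma_0)\ge -\hat\gamma$ whenever $\gamma_0\ge0$. Therefore $|\gamma_k|\le\hat\gamma$ for all $k$, which is exactly Assumption~\ref{ass:gamma_bound} with $\gammac=\hat\gamma$.

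\textbf{Main obstacle.} I expect the difficulty to be the sign and constant bookkeeping: matching the factor $2$ in the normalization estimate to the $2L$ in \eqref{eq:epsk}, and handling the lower side $\gamma_k\ge-\hat\gamma$ when $\gamma_0<0$. My first attempt would be to redefine everything with $|\gamma_0|$. Failing that, I would note that $\gamma_k$ near $-1$ means the gradients are anti-parallel, and in that configuration the Twin step is still well defined by Proposition~\ref{cor:Twin_two_conv}. The primary content of the statement is keeping $\gamma_k$ bounded away from $+1$, and that is what Steps 1--3 establish cleanly.
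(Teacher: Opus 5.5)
Your argument is the paper's own: the Lipschitz bound $L\eta_k\alpha_k$ on the gradient change, the normalization inequality with constant $2$ and denominator $\|\wt\bp_k\|$, the split $|\gamma_{k+1}-\gamma_k|\le\|\bp_{k+1}-\bp_k\|+\|\bq_{k+1}-\bq_k\|$, and summation of the $\eps_j$. The one difference is that the paper uses a one-sided induction and so proves only $\gamma_k\le\hat\gamma$. Your telescoping of $|\gamma_k-\gamma_0|$ also gives $\gamma_k\ge-\hat\gamma$ when $\gamma_0\ge0$, which covers the orthogonal start $\gamma_0=0$ used later in the paper.

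Three corrections to your side remarks:
\begin{enumerate}
\item There is nothing to reconcile in the constants. The inequality $2L\eta_k\big(\alpha_k/\|\wt\bp_k\|+\beta_k/\|\wt\bq_k\|\big)\le\eps_{k+1}$ holds exactly by \eqref{eq:epsk}; that is why the factor $2L$ appears there.
\item $\hat\gamma\in(0,1)$ holds for every $\gamma_0\in(-1,1)$. The sign of $\gamma_0$ matters only for the lower bound.
\item For $\gamma_0<0$ the two-sided claim is false as stated. For example, $\gamma_0=-0.9$ gives $\hat\gamma=0.05<|\gamma_0|$. So replacing $\gamma_0$ by $|\gamma_0|$ in $\eps_k$ and $\hat\gamma$ is the right repair. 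You should drop the anti-parallel fallback. As $\gamma_k\to-1$ one still has $\lambda_{\min}(M_k)=1-|\gamma_k|\to0$, so the Twin system degenerates just as in the parallel case. Proposition~\ref{cor:Twin_two_conv} says nothing about whether the step is well posed.
\end{enumerate}
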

\begin{proof}
\hspace*{0.2cm}
By the Lipschitz continuity of the gradients, applying the bound along the search directions yields:
\[
\|\wt \bp_{k+1}(\eta) - \wt \bp_k\| \le L \ \|\eta \, \alpha_k \, \bp_k\| = L \, \alpha_k \, \eta,
\]
and similarly, $\|\wt \bq_{k+1}(\eta) - \wt \bq_k\| \le L \, \beta_k \, \eta$.
Since the inequality
\[
\left\| \, \ba \, / \, \|\ba\| - \bb \, / \, \|\bb\| \, \right\|
\;\le\;
2 \ \|\ba - \bb\| \, / \, \|\bb\|
\]
holds for any nonzero vectors $\ba$ and $\bb$ (that is, normalization is Lipschitz continuous; see, e.g., \cite{maligranda2006simple}), we have
\begin{equation}
\|\bp_{k+1}(\eta) - \bp_k\| \le \tfrac{2 \, L \ \alpha_k}{\|\wt \bp_k\|} \, \eta \quad \text{and} \quad \|\bq_{k+1}(\eta) - \bq_k\| \le \tfrac{2 \, L \, \beta_k}{\|\wt \bq_k\|} \, \eta. \label{eq:double}
\end{equation}
Let $\gamma_{k+1}(\eta) = \bp_{k+1}(\eta)^{\top} \bq_{k+1}(\eta)$ and $\gamma_{k+1}(0) = \gamma_k$, so
\begin{align*}
|\gamma_{k+1}(\eta) - \gamma_{k+1}(0)| & = | \bp_{k+1}(\eta)^{\top} \bq_{k+1}(\eta) - \bp_k^{\top} \bq_k| \\
&\le | \bp_{k+1}(\eta)^{\top} (\bq_{k+1}(\eta) - \bq_k)| + |(\bp_{k+1}(\eta) - \bp_k)^{\top} \bq_k| \\
&\le \|\bq_{k+1}(\eta) - \bq_k\| + \|\bp_{k+1}(\eta) - \bp_k\| ,
\end{align*}
and because of \eqref{eq:double} one has
\begin{equation}
|\gamma_{k+1}(\eta) - \gamma_{k+1}(0)| \le 2 \, L \, \big( \tfrac{\alpha_k}{\|\wt \bp_k\|} + \tfrac{\beta_k}{\|\wt \bq_k\|} \big) \, \eta.
\label{eq:double1}
\end{equation}
The proof proceeds now by induction on $k$.
\noindent \underline{For $k=1$}
from \eqref{eq:double1}
\begin{align}
 |\gamma_{1}(\eta) - \gamma_{1}(0)| & = |\gamma_{1} - \gamma_0| \leq 2 \, L \, \big( \tfrac{\alpha_0}{\|\wt \bp_0\|} + \tfrac{\beta_0}{\|\wt \bq_0\|} \big) \, \eta_0
 \nonumber \\
  & \leq 2 \, L \, \big( \tfrac{\alpha_0}{\|\wt \bp_0\|} + \tfrac{\beta_0}{\|\wt \bq_0\|} \big) \,\tfrac{\eps_1}{2L}\big( \tfrac{\alpha_0}{\|\wt \bp_0\|} + \tfrac{\beta_0}{\|\wt \bq_0\|} \big)^{-1} = \eps_1.
  \label{eq:diffgamma}
\end{align}
If $\gamma_{1} - \gamma_{0}<0$ then $\gamma_1 < \gammac$ trivially holds. Otherwise, because of \eqref{eq:diffgamma}
\[
\gamma_1 \leq \gamma_0 +  \eps_1 = \gamma_0 + \tfrac14 \, (1-\gamma_0).
\]
\noindent \underline{Inductive step}.
Assume $ \gamma_k \le \gamma_0 + \frac{1-\gamma_0}{2} \sum_{i=1}^{k} \left(\frac{1}{2}\right)^{i}$. 
If $\gamma_{k+1} - \gamma_k < 0$ then inequality $\gamma_{k+1} < \gammac$ is satisfied, otherwise
\[
\gamma_{k+1}-\gamma_{k} \leq 2 \, L \, \big( \tfrac{\alpha_k}{\|\wt \bp_k\|} + \tfrac{\beta_k}{\|\wt \bq_k\|} \big) \, \eta_k.
\]
From the definition of $\eta_k$, from \eqref{eq:double1} and from the induction hypothesis it follows that 
\[
\gamma_{k+1} \le \Big( \gamma_0 + (1-\gamma_0) \, \sum_{i=1}^{k} \big(\tfrac12\big)^{i+1} \Big) + (1-\gamma_0) \, \big(\tfrac12\big)^{k+2} = \gamma_0 + (1-\gamma_0) \, \sum_{i=1}^{k+1} \big(\tfrac12\big)^{i+1}.
\]
and this completes the induction proof. Finally, since
\[
\gamma_0 + (1-\gamma_0) \sum_{i=1}^{\infty} \, \big(\tfrac12\big)^{i+1} = \tfrac12 \, (1+\gamma_0) = \gammac < 1,
\]
Assumption \ref{ass:gamma_bound} is satisfied.
\end{proof}
To illustrate the effect of the damping factors, we consider a simple example of a quadratic function
with $n = 5$ and a Lipschitz constant $L = 100$. The main starting point $\bx_0$ was generated randomly, while the initial auxiliary point $\bz_0$ is chosen to ensure that the initial gradients were orthogonal ($\gamma_0 = 0$).  Table~\ref{tab:comparison} compares the behavior of $|\gamma_k|$ and $f_k$ over the first $10$ iterations of the undamped Twin method with the Twin method using the damping factor calculated according to Proposition~\ref{lem:geometric_damping}.
The example highlights how the use of a damping factor slows down the alignment of the two directions. Conversely, in the algorithm without damping, the alignment occurs quickly, resulting in large stepsizes and divergence.
For the final method in Section~\ref{sec:algo}, we will not use damping, but instead combine the Twin method with a standard gradient type method.

\subsection{Stepsize analysis} \label{sec:analysis}
We will now present some results regarding stepsizes, limiting ourselves to the undamped case ($\eta_k = 1$).
We begin by presenting an upper bound for the stepsizes.
\begin{proposition}
Let $\alpha_k$ and $\beta_k$ be the stepsizes for the $k$th iteration computed according to \eqref{eq:final_step_selection}. Then
\begin{align}
| \alpha_k|, \ | \beta_k| \le \frac{\|\bd_k\|}{1 - |\gamma_k|}.
\label{eq:alfabetabnd}
\end{align}   
\end{proposition}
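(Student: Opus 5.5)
Following the selection rule \eqref{eq:final_step_selection}, I split into the interior case and the two boundary cases. In each, $\alpha_k$ and $\beta_k$ are explicit linear functions of $\bd_k$, so the bound will follow from Cauchy--Schwarz with $\|\bp_k\| = \|\bq_k\| = 1$.

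\emph{Boundary cases.} If $(\alpha_k,\beta_k) = (\wh\alpha_k,0)$ or $(0,\wh\beta_k)$, then by \eqref{eq:alfabetacappello} the nonzero entry is either $\max(-\bp_k^\top\bd_k,0) \le \|\bd_k\|$ or $\max(\bq_k^\top\bd_k,0) \le \|\bd_k\|$. Since $0 \le |\gamma_k| < 1$, we have $\|\bd_k\| \le \|\bd_k\|/(1-|\gamma_k|)$, so \eqref{eq:alfabetabnd} holds.

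\emph{Interior case.} Here $(\alpha_k,\beta_k) = (\wt\alpha_k,\wt\beta_k)$ from \eqref{eq:explicit_steps_normalized}. Set $a = -\bp_k^\top\bd_k$ and $b = \bq_k^\top\bd_k$, so that $|a|,|b| \le \|\bd_k\|$. Then $\wt\alpha_k = (a+\gamma_k b)/(1-\gamma_k^2)$ and $\wt\beta_k = (b+\gamma_k a)/(1-\gamma_k^2)$. The triangle inequality gives
\[
|\wt\alpha_k| \le \frac{(1+|\gamma_k|)\,\|\bd_k\|}{(1-|\gamma_k|)(1+|\gamma_k|)} = \frac{\|\bd_k\|}{1-|\gamma_k|},
\]
and the same argument applies to $\wt\beta_k$. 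The key point is the factorization $1-\gamma_k^2 = (1-|\gamma_k|)(1+|\gamma_k|)$, whose second factor cancels the $1+|\gamma_k|$ in the numerator.

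I do not expect a real obstacle. The only care needed is to use the componentwise triangle inequality rather than the cruder operator-norm bound $\|M_k^{-1}\|\,\|G_k^\top\bd_k\|$: that route would introduce the extra factor $\sqrt{2}$ that appears in the proof of Proposition~\ref{thm:asym_orth_conv}. Finally, the bound needs $|\gamma_k|<1$ for the right-hand side to be finite. This holds whenever the unconstrained system \eqref{eq:system2x2_normalized} is nonsingular, and in the boundary cases the bound holds trivially in any event.
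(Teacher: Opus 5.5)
Your proof is correct and follows essentially the same route as the paper. You use the same case split from the selection rule, the bound $\wh\alpha_k,\wh\beta_k\le\|\bd_k\|$ in the boundary cases, and in the interior case a bound of the numerator by $(1+|\gamma_k|)\|\bd_k\|$ followed by the cancellation $1-\gamma_k^2=(1-|\gamma_k|)(1+|\gamma_k|)$. The only cosmetic difference is the order of the inequalities: the paper first bounds $\|-\bp_k+\gamma_k\bq_k\|\le 1+|\gamma_k|$ and then applies Cauchy--Schwarz, whereas you apply Cauchy--Schwarz to each inner product and then use the scalar triangle inequality.
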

\begin{proof}
\hspace*{0.2cm}
If $(\alpha,\beta)=(\tilde\alpha,\tilde\beta)$,
\eqref{eq:alfabetabnd} follows from the fact that $\|-\bp_k + \gamma_k \ \bq_k\| \le 1 + |\gamma_k|$. Otherwise, since
$\wh \alpha_k = \max(-\bp_k^{\top} \bd_k, 0) \le \|\bd_k\|, \; \wh \beta_k = \max(\bq_k^{\top} \bd_k, 0) \le \|\bd_k\|$ and  $1 - |\gamma_k| \le 1$, we have $\|\bd_k\| \le \|\bd_k\|\, / \,(1 - |\gamma_k|)$, and therefore, the inequality \eqref{eq:alfabetabnd} holds.
\end{proof}
Asymptotically, this bound provides an alternative derivation of the implication $\|\bd_k\| = \calo(\varepsilon) \Rightarrow |\alpha_k| = \calo(\varepsilon)$ as in Proposition~\ref{thm:asym_orth_conv}. The factor $1 - |\gamma_k|$ in the denominator naturally corresponds to Assumption~\ref{ass:gamma_bound}: bounding the search directions away from collinearity is essential to prevent the algorithm from getting stuck.

Let us analyze the asymptotic situation for general objective functions. The following proposition provides additional information about the behavior of the two sequences $\alpha_k$ and $\beta_k$.
Without loss of generality, we may assume that the sequence converges to the origin, so that $\bx^* = \zero$.
Consider the asymptotic regime where both $\|\bx_k\|$ and $\|\bz_k\|$ are $\calo(\eps)$, which implies $\|\bd_k\| = \|\bx_k - \bz_k\| = \calo(\eps)$.

\begin{proposition}
\label{thm:quad_sync}
Assume that the gradient $\nabla f$ is Lipschitz continuous with constant $L > 0$ and  Assumption~\ref{ass:gamma_bound} holds.
Let the sequences $\{\bx_k\}$ and $\{\bz_k\}$ converge to a minimizer $\bx^*$. If $\|\bx_k-\bx^*\| = \calo(\eps)$ and $\|\bz_k - \bx^*\| = \calo(\eps)$, then $| \alpha_k - \beta_k| = \calo(\eps^2)$.
\end{proposition}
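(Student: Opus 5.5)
Set $\bx^* = \zero$ and work with the unconstrained Twin steps \eqref{eq:explicit_steps_normalized}. Asymptotically these coincide with the selected steps when both are positive, and the boundary cases of \eqref{eq:final_step_selection} are handled separately at the end. The idea is to write the difference $\wt\alpha_k - \wt\beta_k$ as a single inner product of $\bd_k$ with a vector built from $\bp_k$ and $\bq_k$, and then show that this vector is $\calo(\eps)$. Since $\|\bd_k\| = \calo(\eps)$, the product is $\calo(\eps^2)$.

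From \eqref{eq:explicit_steps_normalized},
\[
\wt\alpha_k - \wt\beta_k = (1-\gamma_k^2)^{-1}\big(-(1+\gamma_k)\bp_k - (1+\gamma_k)\bq_k\big)^{\top}\bd_k = -(1-\gamma_k)^{-1}(\bp_k+\bq_k)^{\top}\bd_k .
\]
By Assumption~\ref{ass:gamma_bound}, the factor $(1-\gamma_k)^{-1}$ is bounded by $(1-\gammac)^{-1}$. It therefore suffices to show $|(\bp_k+\bq_k)^{\top}\bd_k| = \calo(\eps^2)$.

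Using the orthogonality relations \eqref{eq:orth_k+1} from the previous step is tempting, but they concern $\bd_{k+1}$. The direct route is to linearize the gradients around $\bx^*$. For $f\in C^2$ with Hessian $H$ at $\bx^*$, we have $\wt\bp_k = -H\bx_k + o(\eps)$ and $\wt\bq_k = -H\bz_k + o(\eps)$. Lipschitz continuity alone only gives $\|\wt\bp_k - \wt\bq_k\|\le L\|\bd_k\|$, and that is what I would exploit.

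The key computation is
\[
\bp_k+\bq_k = \bp_k - \bq_k + 2\bq_k ,
\]
and also
\[
(\bp_k+\bq_k)^{\top}(\bp_k-\bq_k) = 1-1 = 0 .
\]
So $\bp_k+\bq_k$ is exactly orthogonal to $\bp_k-\bq_k$, the component along which the normalized directions differ. The remaining task is to show that $\bd_k$ is, up to $\calo(\eps^2)$, aligned with that difference, or that the component of $\bd_k$ along $\bp_k+\bq_k$ is small.

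This is the main obstacle. In general $\bd_k$ need not be parallel to $\bp_k-\bq_k$; for a quadratic, $\wt\bp_k-\wt\bq_k = -A\bd_k$, which is not parallel to $\bd_k$. The normalization also reweights things: $\bp_k-\bq_k$ is not $\calo(\eps)$ in size, since the angle is bounded away from zero. The first thing I would try is to write $\bd_k$ through the difference of the gradient norms and the directions, using the mean-value form $\wt\bp_k-\wt\bq_k = -\bar H_k\bd_k$ with $\bar H_k=\int_0^1\nabla^2 f(\bz_k+t\bd_k)\,dt$. I would then check whether the symmetric part in $\bp_k$ and $\bq_k$ cancels to leading order.

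If no exact cancellation appears, I would fall back on the asymptotic regime that the statement itself invokes. Both steps are $\calo(\eps)$ by Proposition~\ref{thm:asym_orth_conv}, and each equals its leading-order value up to an $\calo(\eps^2)$ correction coming from the Lipschitz remainder in the gradients. The claim then reduces to showing that the leading-order terms of $\wt\alpha_k$ and $\wt\beta_k$ coincide, which I would verify in the $2\times2$ system \eqref{eq:system2x2_normalized}.

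For the boundary cases, $\wh\alpha_k$ or $\wh\beta_k$ is zero, so I would need to argue that near the minimizer both unconstrained steps are positive. Otherwise the difference is only $\calo(\eps)$, and the claim must be restricted to the interior case.
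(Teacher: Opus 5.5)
Your proposal stops exactly where the work is. After reducing to $|(\bp_k+\bq_k)^{\top}\bd_k| = \calo(\eps^2)$, you call this the main obstacle and never resolve it. The fallback, that the leading-order parts of $\wt\alpha_k$ and $\wt\beta_k$ coincide, is asserted rather than verified. (A minor slip: $(-\bp_k+\gamma_k\bq_k)-(\bq_k-\gamma_k\bp_k) = -(1-\gamma_k)(\bp_k+\bq_k)$, so the prefactor is $-(1+\gamma_k)^{-1}$, as in the paper. Under Assumption~\ref{ass:gamma_bound} either factor is bounded, so nothing depends on it.) The paper handles this step with the idea you set aside. With the index shifted by one, \eqref{eq:orth_k+1} does concern $\bd_k$, namely $\bp_{k-1}^{\top}\bd_k = \bq_{k-1}^{\top}\bd_k = 0$. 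Combined with the assertion $\|\bp_k-\bp_{k-1}\|,\|\bq_k-\bq_{k-1}\| = \calo(\eps)$, the paper uses this to get $|\bp_k^{\top}\bd_k|,|\bq_k^{\top}\bd_k| = \calo(\eps^2)$. That covers all three cases of \eqref{eq:final_step_selection} at once, including the boundary cases you wanted to exclude.

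Your worry about normalization is nevertheless exactly right, and the gap does not close along either route.

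\begin{itemize}
\item \textbf{The paper's perturbation bound fails.} It applies the mean-value formula for $\nabla f$ to the normalized direction. The normalization inequality from the proof of Proposition~\ref{lem:geometric_damping} only gives $\|\bp_k-\bp_{k-1}\|\le 2L\|\bx_k-\bx_{k-1}\|/\|\wt\bp_{k-1}\|$, which is merely $\calo(1)$ when $\|\wt\bp_{k-1}\| = \Theta(\eps)$.
\item \textbf{The quadratic case shows this exactly.} After an interior step, $\bp_k^{\top}\bd_k = -\alpha_{k-1}\,\bp_{k-1}^{\top}A\bd_k/\|\wt\bp_k\|$. This is a ratio of order $\eps^2/\eps$, no better than the trivial bound $\|\bd_k\|$.
\item \textbf{The paper's estimate contradicts its own later result.} Were the estimate true, \eqref{eq:explicit_steps_normalized} would make $\wt\alpha_k,\wt\beta_k$ themselves $\calo(\eps^2)$. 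That contradicts the paper's later quadratic-case lower bound of $\sqrt{\alpha_k^2+\beta_k^2}$ by a multiple of $\|\bn_k\| = \Theta(\eps)$.
\item \textbf{Scaling rules out any local argument.} Take a quadratic with $\bx^*=\zero$ and replace $(\bx_0,\bz_0)$ by $(c\,\bx_0,c\,\bz_0)$ with $c>0$. Every $\bp_k$, $\bq_k$, $\gamma_k$, case selection and orthogonality relation is unchanged, while every $\bd_k$, $\alpha_k$, $\beta_k$ is multiplied by $c$. So $|\alpha_k-\beta_k|$ is exactly proportional to the scale. No argument based on the size hypotheses, Lipschitz continuity, Assumption~\ref{ass:gamma_bound} and one-step structure can produce an extra factor $\eps$.
\item \textbf{Concrete instance.} Already at $k=0$, $f=\tfrac12\|\bx\|^2$ with $\bx_0=\eps\,\mathbf{e}_1$, $\bz_0=2\eps\,\mathbf{e}_2$ gives $\gamma_0=0$ and $(\alpha_0,\beta_0)=(\eps,2\eps)$, both sequences reaching $\bx^*$ in one step.
\end{itemize}

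Thus your fallback cannot succeed, and your caveat about the boundary cases is justified. The statement would need an additional dynamical hypothesis rather than a sharper local estimate, for example that the rescaled pair $(\bx_k-\bx^*,\bz_k-\bx^*)/\eps$ approaches the set where $\alpha_k=\beta_k$ at rate $\eps$.
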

\begin{proof}
From \eqref{eq:final_step_selection}, the difference of the stepsizes is given by:
\begin{equation*}
\alpha_k - \beta_k =
\begin{cases}
\wt\alpha_k - \wt\beta_k = -(1 + \gamma_k)^{-1} \, (\bp_k + \bq_k)^{\top} \bd_k \: \: \: \: \text{if }  \wt \alpha_k > 0 \text{ and } \wt \beta_k > 0,\\[1.5mm]
 \wh\alpha_k = -\bp_k^{\top} \bd_k \: \: \: \:\text{otherwise, if } \|(\bx_k + \wh \alpha_k \, \bp_k) - \bz_k\| \le \|\bx_k - (\bz_k + \wh \beta_k \, \bq_k)\|,\\[1.5mm]
 \wh\beta_k = \bq_k^{\top} \bd_k  \: \: \: \: \: \: \: \: \text{otherwise, if } \|(\bx_k + \wh \alpha_k \, \bp_k) - \bz_k\| > \|\bx_k - (\bz_k + \wh \beta_k \, \bq_k)\|,
\end{cases}
\end{equation*}
The asymptotic assumptions imply that $\|\bd_k\| = \calo(\eps)$ and  $\|\bx_k - \bx_{k-1}\| = \calo(\eps)$.
By the geometric construction of the method, the exact orthogonality condition \eqref{eq:orth_k+1} shifted to the current iteration yields $\bp_{k-1}^{\top} \bd_k = 0$ and $\bq_{k-1}^{\top} \bd_k = 0$. We know that $\bp_k = \bp_{k-1} + \nabla^2 f(\bxi) \, (\bx_k - \bx_{k-1}),$ for some $\bxi$ between $\bx_{k-1}$ and $\bx_k$. Therefore, $\bp_k = \bp_{k-1} + \bw$, where $\|\bw\| = \|\nabla^2 f(\bxi) \, (\bx_k-\bx_{k-1})\| \le L \, \eps$ and so $\|\bp_k-\bp_{k-1}\|$ and $\|\bq_k-\bq_{k-1}\|$ are both $\calo(\eps)$. 
Since $\|\bw\| = \calo(\eps)$ and $\|\bd_k\| = \calo(\eps)$, we obtain $|\bp_k^{\top} \bd_k| = \calo(\eps^2)$.
A similar observation holds for $\bq_k$, yielding $|\bq_k^{\top} \bd_k| = \calo(\eps^2)$, and then 

\begin{equation*}
|\alpha_k - \beta_k| =
\begin{cases}
|\wt\alpha_k - \wt\beta_k| \le (1+\gamma_k)^{-1} \big( |\bp_k^{\top} \bd_k| + |\bq_k^{\top} \bd_k| \big) = \calo(\eps^2) \: \: \: \: \text{if }  \wt \alpha_k > 0 \text{ and } \wt \beta_k > 0.\\[1.5mm]
\wh\alpha_k \le |\bp_k^{\top} \bd_k| = \calo(\eps^2) \: \: \: \: \text{otherwise, if } \|(\bx_k + \wh \alpha_k \, \bp_k) - \bz_k\| \le \|\bx_k - (\bz_k + \wh \beta_k \, \bq_k)\|. \\[1.5mm]
 \wh\beta_k \le |\bq_k^{\top} \bd_k| = \calo(\eps^2) \: \: \: \: \text{otherwise, if } \|(\bx_k + \wh \alpha_k \, \bp_k) - \bz_k\| > \|\bx_k - (\bz_k + \wh \beta_k \, \bq_k)\|.
\end{cases}
\end{equation*}
\end{proof}
While Proposition~\ref{thm:asym_orth_conv} guarantees that the individual stepsizes scale as $\alpha_k, \,  \beta_k = \calo(\eps)$, Proposition~\ref{thm:quad_sync} reveals that their difference decays quadratically. This suggests the two search processes synchronize strictly faster than the sequences themselves converge to the optimum. 
To empirically validate this theoretical result we consider the same strictly convex quadratic problem used in Section~\ref{sec:con_nonquad}. Table~\ref{tab:alpha_beta} shows the evolution of the stepsizes confirming that $\alpha_k-\beta_k \rightarrow 0$ faster than the single steps.

\begin{table}[h!]
    \centering
    \caption{Twin method with and without damping: convergence history ($f_k=\min \{f(\mathbf{x}_k),f(\mathbf{z}_k)\}$).}
    \label{tab:comparison}
    \begin{tabular}{r c c c c c c} 
        \hline
        $k$ & \multicolumn{3}{c}{$\eta_k = 1$} & \multicolumn{3}{c}{$\eta_k$ via \eqref{eq:epsk}} \\
        \cline{2-4} \cline{5-7}
        & $|\gamma_k|$ & $f_k$ & $\|\bx_k - \bx_{k-1}\|$ & $|\gamma_k|$ & $f_k$ & $\|\bx_k - \bx_{k-1}\|$ \\
        \hline
        1 & $4.85 \cdot 10^{-1}$ & 221418 & $2.48 \cdot 10^{2}$ & 0.344 & 221418 & $1.24 \cdot 10^{2}$ \\
        2 & $9.03 \cdot 10^{-1}$ & 43998 & $6.71 \cdot 10^{1}$ & 0.073 & 66435 & $1.11 \cdot 10^{0}$ \\
        3 & $9.92 \cdot 10^{-1}$ & 5286 & $4.29 \cdot 10^{1}$ & 0.182 & 65284 & $6.73 \cdot 10^{1}$ \\
        4 & $9.99 \cdot 10^{-1}$ & 34022 & $4.23 \cdot 10^{1}$ & 0.026 & 20525 & $3.66 \cdot 10^{1}$ \\
        5 & $9.99 \cdot 10^{-1}$ & 12509 & $8.32 \cdot 10^{1}$ & 0.188 & 7002 & $2.11 \cdot 10^{1}$ \\
        6 & $9.99 \cdot 10^{-1}$ & 226586 & $1.20 \cdot 10^{2}$ & 0.189 & 2467 & $1.25 \cdot 10^{1}$ \\
        7 & $9.99 \cdot 10^{-1}$ & 141127 & $2.58 \cdot 10^{2}$ & 0.038 & 883 & $4.16 \cdot 10^{0}$ \\
        8 & $9.99 \cdot 10^{-1}$ & 2091589 & $3.68 \cdot 10^{2}$ & 0.133 & 499 & $9.23 \cdot 10^{-1}$ \\
        9 & $9.99 \cdot 10^{-1}$ & 1334328 & $7.91 \cdot 10^{2}$ & 0.139 & 430 & $3.92 \cdot 10^{-1}$ \\
        10 & $9.99 \cdot 10^{-1}$ & 19676631 & $1.13 \cdot 10^{3}$ & 0.141 & 403 & $1.82 \cdot 10^{-1}$ \\
        \hline
    \end{tabular}
\end{table}

\begin{table}[h!]
    \centering
    \caption{Undamped Twin Method: convergence history of $\{ \alpha_k\}$ and $\{ \beta_k\}$.} 
    \label{tab:alpha_beta}
    \begin{tabular}{r c c c} 
        \hline
        $k$ & $\alpha_k$ & $\beta_k$ & $\alpha_k-\beta_k$ \\
        \hline
        1  & $3.14 \cdot 10^{-2}$ & $0$ & $3.14 \cdot 10^{-2}$ \\
        2  & $1.23 \cdot 10^{-2}$ & $4.38 \cdot 10^{-1}$ & $4.26 \cdot 10^{-1}$ \\
        3  & $3.40 \cdot 10^{-2}$ & $5.17 \cdot 10^{-2}$ & $1.77 \cdot 10^{-2}$ \\
        4  & $1.93 \cdot 10^{-2}$ & $1.97 \cdot 10^{-2}$ & $3.92 \cdot 10^{-4}$ \\
        5  & $3.37 \cdot 10^{-2}$ & $3.37 \cdot 10^{-2}$ & $3.51 \cdot 10^{-5}$ \\
        6  & $2.06 \cdot 10^{-2}$ & $2.06 \cdot 10^{-2}$ & $7.94 \cdot 10^{-8}$ \\
        7  & $3.88 \cdot 10^{-2}$ & $3.88 \cdot 10^{-2}$ & $5.57 \cdot 10^{-7}$ \\
        8  & $1.88 \cdot 10^{-2}$ & $1.88 \cdot 10^{-2}$ & $4.82 \cdot 10^{-8}$ \\
        9  & $4.86 \cdot 10^{-2}$ & $4.86 \cdot 10^{-2}$ & $5.97 \cdot 10^{-8}$ \\
        10 & $1.68 \cdot 10^{-2}$ & $1.68 \cdot 10^{-2}$ & $4.30 \cdot 10^{-9}$ \\
        \hline
    \end{tabular}
\end{table}

We shall now restrict our analysis to the strictly convex quadratic problem \eqref{eq:quadratic_problem}, for which we shall provide further specific properties of the stepsizes. 
\begin{proposition}
Consider the quadratic problem \ref{eq:quadratic_problem}.
Let $\bx_k, \bz_k$ be two iterates with nonzero gradients and let assume ${\alpha}_k, {\beta}_k$ solve the Twin system \eqref{eq:system2x2_normalized}.
If $|\gamma_k|$ satisfies Assumption~\ref{ass:gamma_bound}, then
\[
\|\bn_k\| \cdot (1+|\gamma_k|)^{-1} \cdot \lambda_n^{-1} \le \sqrt{\smash[b]{{\alpha}_k^2 + {\beta}_k^2}} \le \|\bn_k\| \cdot (1-|\gamma_k|)^{-1} \cdot \lambda_1^{-1},
\]
where $\bn_k = [\|\wt \bp_k\|, \, \|\wt \bq_k\|]^{\top}$.
\end{proposition}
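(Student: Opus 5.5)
The plan is to exploit the fact that, for the quadratic problem \eqref{eq:quadratic_problem}, the distance vector $\bd_k$ can be written exactly in terms of the two search directions. Since $\nabla f(\bx_k)-\nabla f(\bz_k)=A\bd_k$ and $\wt\bp_k=\|\wt\bp_k\|\bp_k$, $\wt\bq_k=\|\wt\bq_k\|\bq_k$, we get $A\bd_k = -\|\wt\bp_k\|\bp_k+\|\wt\bq_k\|\bq_k = -G_k\bn_k$. Hence $\bd_k=-A^{-1}G_k\bn_k$, with $G_k$ as in \eqref{eq:Gk}. Substituting this into the normal equations \eqref{eq:system2x2_normalized}, written as $M_k\bv_k=-G_k^\top\bd_k$ with $\bv_k=[\alpha_k,\beta_k]^\top$, gives the key identity
\[
M_k\bv_k = G_k^\top A^{-1}G_k\,\bn_k, \qquad\text{equivalently}\qquad G_k\bv_k = P_k A^{-1} G_k\bn_k,
\]
where $P_k=G_kM_k^{-1}G_k^\top$ is the orthogonal projector onto $\mathrm{span}\{\bp_k,\bq_k\}$. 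Both bounds then follow from the spectral information $\lambda(M_k)=1\mp|\gamma_k|$ (so $\|G_k\|^2=1+|\gamma_k|$ and $\sigma_{\min}(G_k)^2=1-|\gamma_k|$) together with $\lambda_n^{-1}\le\lambda(A^{-1})\le\lambda_1^{-1}$.

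For the upper bound I will write $\bv_k=G_k^{+}A^{-1}G_k\bn_k$ with $G_k^+=M_k^{-1}G_k^\top$, so that $\|G_k^+\|=(1-|\gamma_k|)^{-1/2}$. This gives
\[
\|\bv_k\|\le (1-|\gamma_k|)^{-1/2}\,\lambda_1^{-1}\,(1+|\gamma_k|)^{1/2}\,\|\bn_k\|.
\]
The elementary inequality $(1+|\gamma_k|)(1-|\gamma_k|)\le 1$ then yields $\sqrt{(1+|\gamma_k|)/(1-|\gamma_k|)}\le(1-|\gamma_k|)^{-1}$, which is the claimed right-hand side. Assumption~\ref{ass:gamma_bound} only guarantees that $M_k$ is invertible, so the quantities above are well defined.

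For the lower bound I will set $\bw=G_k\bn_k$ and $\bu=G_k\bv_k=P_kA^{-1}\bw$. Since $\bw\in\mathrm{range}(G_k)$, we have $\bw^\top\bu=\bw^\top A^{-1}\bw\ge\lambda_n^{-1}\|\bw\|^2$, and Cauchy--Schwarz then gives $\|\bu\|\ge\lambda_n^{-1}\|\bw\|$. Combining this with $\|\bu\|\le(1+|\gamma_k|)^{1/2}\|\bv_k\|$ and $\|\bw\|\ge(1-|\gamma_k|)^{1/2}\|\bn_k\|$ produces
\[
\|\bv_k\|\ge \lambda_n^{-1}\sqrt{\tfrac{1-|\gamma_k|}{1+|\gamma_k|}}\,\|\bn_k\|.
\]

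This lower-bound step is the main obstacle: the estimate just derived is weaker than the stated factor $(1+|\gamma_k|)^{-1}$, because $\sqrt{(1-|\gamma_k|)/(1+|\gamma_k|)}\le(1+|\gamma_k|)^{-1}$ is only guaranteed when $\gamma_k=0$. To recover the stated constant, I would first try to use $\|\bv_k\|\ge\|M_k\bv_k\|/(1+|\gamma_k|)$, which reduces the claim to $\|G_k^\top A^{-1}G_k\bn_k\|\ge\lambda_n^{-1}\|\bn_k\|$. That inequality does not follow from spectral bounds alone, so I would try to exploit structure the generic estimates ignore: that $\bn_k$ has positive entries, and that $\bp_k,\bq_k$ are normalized (the diagonal of $G_k^\top A^{-1}G_k$ is at least $\lambda_n^{-1}$). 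I would look at the two coordinates of $G_k^\top A^{-1}G_k\bn_k$ separately.

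If this fails, the honest outcome is the lower bound with constant $\sqrt{(1-|\gamma_k|)/(1+|\gamma_k|)}$, which coincides with the stated one at $\gamma_k=0$. I would then test small $2\times 2$ or $3\times 3$ examples with $|\gamma_k|$ near $\gammac$ to see whether the stated constant can actually be violated.
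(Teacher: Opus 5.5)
\textbf{Upper bound.} Your upper bound is correct, and it does not follow the paper's argument. The paper also writes $[\alpha_k,\beta_k]^\top=M_k^{-1}G_k^\top A^{-1}G_k\bn_k$. It then uses $\|G_k^\top A^{-1}G_k\|\le\lambda_1^{-1}$, which drops the factor $\|G_k\|^2=1+|\gamma_k|$ and is false in general (in the example below $\|G_k^\top A^{-1}G_k\|\approx1.04>\lambda_1^{-1}=1$). Once that step is corrected, the paper's route only yields the weaker constant $(1+|\gamma_k|)(1-|\gamma_k|)^{-1}\lambda_1^{-1}$. Your factorization through $G_k^+$ uses
\[
\|G_k^+\|\,\|G_k\|=\sqrt{(1+|\gamma_k|)/(1-|\gamma_k|)}\le(1-|\gamma_k|)^{-1},
\]
and this is what actually proves the right-hand inequality.

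Your lower bound $\lambda_n^{-1}\sqrt{(1-|\gamma_k|)/(1+|\gamma_k|)}\,\|\bn_k\|$ is also correct. There is one wording slip. The inequality $\sqrt{(1-|\gamma_k|)/(1+|\gamma_k|)}\le(1+|\gamma_k|)^{-1}$ always holds, because $1-\gamma_k^2\le 1$. What holds only at $\gamma_k=0$ is the reverse inequality, which is the one you would need.

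\textbf{Lower bound.} The step you could not complete cannot be completed, because the stated lower bound is false; your suspicion is right. Here is a counterexample.
Take $n=2$, $A=\mathrm{diag}(1,24)$, $\bb=\zero$, $\bp_k=(3/5,4/5)^\top$ and $\bq_k=(4/5,3/5)^\top$, so $\gamma_k=24/25$.
Take $\bn_k=(97,71)^\top$, i.e.\ $\bx_k=-97A^{-1}\bp_k=(-291/5,-97/30)^\top$ and $\bz_k=-71A^{-1}\bq_k=(-284/5,-71/40)^\top$.
Then $\bd_k=(-7/5,-35/24)^\top$.
The right-hand side of \eqref{eq:system2x2_normalized} is $(301/150,\,-399/200)^\top$, and its solution is $(\alpha_k,\beta_k)=(7/6,\,-7/8)$. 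Hence
\[
\sqrt{\alpha_k^2+\beta_k^2}=\tfrac{35}{24}\approx 1.46
\;<\;
\|\bn_k\|\,(1+|\gamma_k|)^{-1}\lambda_n^{-1}=85\sqrt{2}\cdot\tfrac{25}{49}\cdot\tfrac{1}{24}\approx 2.56,
\]
while your bound, $85\sqrt2/168\approx0.72$, holds.

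Requiring positive steps does not rescue the claim. With the same $A,\bp_k,\bq_k$ and $\bn_k=(391,288)^\top$, one gets $(\alpha_k,\beta_k)=(23/3,\,1/2)$, of norm $\approx7.68$, against a claimed lower bound of $\approx10.32$.

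\textbf{Where the paper's proof fails.} The invalid step is
\[
\|[\alpha_k,\beta_k]^\top\|\ge\min_i\lambda_i^{-1}(M_k)\cdot\min_i\lambda_i^{-1}(A)\cdot\|\bn_k\|.
\]
This tacitly assumes $\lambda_{\min}(G_k^\top A^{-1}G_k)\ge\lambda_n^{-1}$. Since $G_k$ is not orthonormal, only $\lambda_{\min}(G_k^\top A^{-1}G_k)\ge(1-|\gamma_k|)\lambda_n^{-1}$ holds; in the example this eigenvalue is $\approx0.0031<1/24$.

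The reduction you planned to try, $\|G_k^\top A^{-1}G_k\bn_k\|\ge\lambda_n^{-1}\|\bn_k\|$, is exactly this tacit claim. It fails in the example ($\approx2.83$ versus $\approx5.01$), even though $\bn_k>0$.

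So your fallback is the correct repaired statement. It is also sharper than what the paper's route gives once fixed, namely $(1-|\gamma_k|)(1+|\gamma_k|)^{-1}\lambda_n^{-1}\|\bn_k\|$.
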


\begin{proof}
\hspace*{0.2cm}
Let $G_k = [\bp_k \ \ -\bq_k]$ as defined in \eqref{eq:Gk}. The matrix of the unconstrained Twin system \eqref{eq:system2x2_normalized} is $M_k = G_k^{\top} G_k$.
Let ${\by} = [{\alpha}_k, {\beta}_k]^{\top}$.
For the right-hand side in the system $M_k {\by} = \br$ we have
\[
\br = \smtxa{r}{ - \bp_k^{\top} (\bx_k - \bz_k) \\[1mm] \bq_k^{\top} (\bx_k - \bz_k)} = - G_k^{\top} (\bx_k - \bz_k)
\]
We know that the nonnormalized directions are $\wt \bp_k = \bb - A\bx_k$ and $\wt \bq_k = \bb - A\bz_k$. Subtracting these yields:
\[
\wt \bp_k - \wt \bq_k = -A \, (\bx_k - \bz_k) \implies \bx_k - \bz_k = -A^{-1} \, (\wt \bp_k - \wt \bq_k).
\]
By definition of $\wt \bp_k$ and $\wt \bq_k$, their difference can be written using $G_k$:
\[
\wt \bp_k - \wt \bq_k = \|\wt \bp_k\| \, \bp_k - \|\wt \bq_k\| \, \bq_k = G_k \, \bn_k.
\]
Consequently, the distance vector is $\bx_k - \bz_k = -A^{-1} G_k \, \bn_k$. Substituting this back into the expression for $\br$ yielding
\[
\br = G_k^{\top} A^{-1} G_k \, \bn_k.
\]
Therefore, the stepsize vector is given by
\[
{\by} = M_k^{-1} G_k^{\top} A^{-1} G_k \, \bn_k.
\]
Since $|\gamma_k| < 1$, both $M_k^{-1}$ and $G_k^{\top} \! A^{-1} G_k$ are symmetric positive definite, and therefore their singular values are equal to their eigenvalues.
Therefore we have:
\[
\|{\by}\| \le \|M_k^{-1}\| \cdot \|G_k^{\top} \! A^{-1} G_k\| \cdot \|\bn_k\| \le (1 - |\gamma_k|)^{-1} \cdot \lambda_1^{-1} \cdot \|\bn_k\|.
\]
Similarly, for the lower bound, we have
\[
\|\by\| \ge \min_i \, |\lambda_i^{-1}(M_k)| \cdot \min_i \, |\lambda_i^{-1}(A)| \cdot \|\bn_k\|
= (1 + |\gamma_k|)^{-1} \cdot \lambda_n^{-1} \cdot \|\bn_k\|.
\]
\end{proof}
According to the previous proposition we have $\sqrt{\smash[b]{{\alpha}_k^2 + {\beta}_k^2}} = \Theta(\eps)$: the individual stepsizes decay linearly with the distance,  while their mutual difference vanishes at an accelerated quadratic rate $\calo(\eps^2)$. 

The bounds established in the previous proposition rely on the norm of the vector $\bn_k = [\|\wt \bp_k\|, \|\wt \bq_k\|]^{\top}$. The following corollary clarifies the behavior of $\|\bn_k\|$.

\begin{corollary}
\label{cor:n_asymptotic}
Let $\bx^*$ be the solution and let $\bn_k = [\|\wt \bp_k\|, \|\wt \bq_k\|]^{\top}$. 
If $\|\bx_k - \bx^*\| = \Theta(\eps)$ and $\|\bz_k - \bx^*\| = \Theta(\eps)$, then $\|\bn_k\| = \Theta(\eps)$.
\end{corollary}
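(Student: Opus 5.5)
The plan is to work directly with the quadratic structure \eqref{eq:quadratic_problem}, where the gradient is affine, and to sandwich each of $\|\wt\bp_k\|$ and $\|\wt\bq_k\|$ between constant multiples of the distance to the minimizer. Since $\bx^*$ satisfies $A\bx^* = \bb$, we have $\wt\bp_k = \bb - A\bx_k = -A(\bx_k - \bx^*)$ and likewise $\wt\bq_k = -A(\bz_k - \bx^*)$. The symmetric positive definiteness of $A$ then gives, via the Rayleigh quotient of $A^2$,
\[
\lambda_1 \, \|\bx_k - \bx^*\| \le \|\wt\bp_k\| \le \lambda_n \, \|\bx_k - \bx^*\|,
\]
and the analogous two-sided bound for $\|\wt\bq_k\|$ in terms of $\|\bz_k - \bx^*\|$.

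Next I combine the two components of $\bn_k$. Using $\|\bn_k\|^2 = \|\wt\bp_k\|^2 + \|\wt\bq_k\|^2$, the bounds above yield
\[
\lambda_1 \big(\|\bx_k-\bx^*\|^2 + \|\bz_k-\bx^*\|^2\big)^{1/2} \le \|\bn_k\| \le \lambda_n \big(\|\bx_k-\bx^*\|^2 + \|\bz_k-\bx^*\|^2\big)^{1/2}.
\]
The hypothesis $\|\bx_k-\bx^*\|, \|\bz_k-\bx^*\| = \Theta(\eps)$ makes the middle square root $\Theta(\eps)$: it is at least the larger of the two distances and at most $\sqrt2$ times it. Since $\lambda_1, \lambda_n$ are fixed positive constants independent of $k$ and $\eps$, it follows that $\|\bn_k\| = \Theta(\eps)$.

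There is no real obstacle here. The only point to state explicitly is that the lower bound uses $\lambda_1 > 0$, i.e. strict convexity; this is what gives the $\Omega(\eps)$ half, whereas only the $\calo(\eps)$ half would survive for a general Lipschitz gradient with constant $L$. I would also remark that, combined with the previous proposition, this gives $\sqrt{\alpha_k^2+\beta_k^2} = \Theta(\eps)$, as claimed in the text. This requires Assumption~\ref{ass:gamma_bound}, so that the factors $(1\pm|\gamma_k|)^{-1}$ stay bounded.
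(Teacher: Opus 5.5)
Your proof is correct and follows the paper's argument: you write $\wt\bp_k = -A(\bx_k-\bx^*)$ and $\wt\bq_k = -A(\bz_k-\bx^*)$, bound their norms between $\lambda_1$ and $\lambda_n$ times the respective distances to $\bx^*$, and combine the two via $\|\bn_k\|^2 = \|\wt\bp_k\|^2+\|\wt\bq_k\|^2$. Your explicit two-sided bound on $\|\bn_k\|$, with the $\max$ and $\sqrt{2}\max$ estimate for the square root, simply makes quantitative the final step that the paper states without detail.
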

\begin{proof}
\hspace*{0.2cm}
We know that 
\[
\wt \bp_k = \bb - A\bx_k = -A \, (\bx_k - \bx^*), \quad \text{and} \quad \wt \bq_k = \bb - A\bz_k = -A \, (\bz_k - \bx^*),
\]
and then 
\[
\lambda_1 \, \|\bx_k - \bx^*\| \le \|\wt \bp_k\| \le \, \lambda_n \, \|\bx_k - \bx^*\|.
\]
This implies that $\|\wt \bp_k\| = \Theta(\|\bx_k - \bx^*\|)$, and similarly $\|\wt \bq_k\| = \Theta(\|\bz_k - \bx^*\|)$.
By definition of $\bn_k$,
\[
\|\bn_k\| = \sqrt{\smash[b]{\|\wt \bp_k\|^2 + \|\wt \bq_k\|^2}}.
\]
Therefore, if both sequences are at a distance $\Theta(\eps)$ from $\bx^*$, their gradient norms are also $\Theta(\eps)$, directly yielding $\|\bn_k\| = \Theta(\eps)$.
\end{proof}

We next analyze the properties of the solutions $\wt\alpha_k$ and $\wt\beta_k$ of the problem \eqref{eq:mutual_step_min_unc}. We will show a result for the orthogonal case ($\gamma_k = 0$), as this is both a practically relevant situation (we will enforce this in the beginning of the process; see Section~\ref{sec:z0}) and allows for a relatively straightforward analysis.
In that case the function in \eqref{eq:mutual_step_min_unc} becomes additively separable, decoupling into the sum of a function of $\alpha$ and a function of $\beta$. 
The following formulas therefore apply to the steplengths
\[
\wt\alpha_k = - \bp_k^{\top} \, (\bx_k - \bz_k), \qquad
\wt\beta_k = \bq_k^{\top} \, (\bx_k - \bz_k).
\]
Assume without loss of generality $f(\bz_k) \ge f(\bx_k)$.
By Taylor expansion one has
\[
\begin{array}{rll}
\ph{-}\wt\alpha_k \, \|\nabla f(\bx_k)\| & = \nabla f(\bx_k)^{\top} \, (\bx_k-\bz_k) & = f(\bx_k)-f(\bz_k)+\tfrac12 \, (\bx_k-\bz_k)^{\top}A \, (\bx_k-\bz_k),\\[1.5mm]
-\wt\beta_k \, \|\nabla f(\bz_k)\| & = \nabla f(\bz_k)^{\top} \, (\bx_k-\bz_k) & = f(\bx_k)-f(\bz_k)-\tfrac12 \, (\bx_k-\bz_k)^{\top}A \, (\bx_k-\bz_k),  
\end{array}
\]
and therefore
\begin{equation}
\begin{array}{l}
\wt\beta_k > 0, \label{beta} \\
 \wt\alpha_k \ \Bigl\{ 
 \begin{array}{ccc}
 >0    & \textrm{if} & 
 f(\bz_k) -f(\bx_k) \ge
 \tfrac12 \, (\bx_k-\bz_k)^{\top}A \, (\bx_k-\bz_k), \\
 \le 0    & \textrm{otherwise}.
\end{array}
\end{array}
\end{equation}
Figure \ref{fig:cases} depicts the two possible cases that may arise in \eqref{beta}.

\begin{figure}[htbp]
    \centering
    \includegraphics[width=0.4\linewidth]{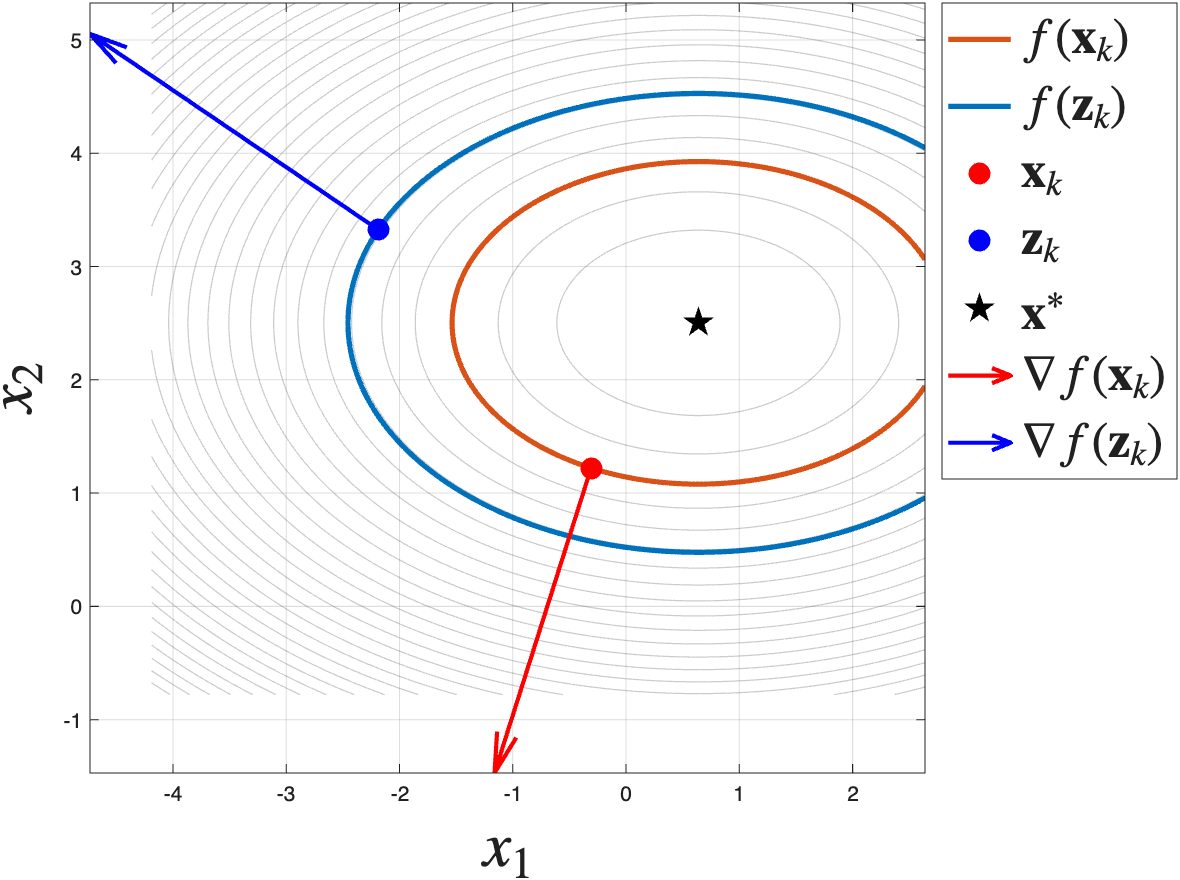}
    \hfill
    \includegraphics[width=0.4\linewidth]{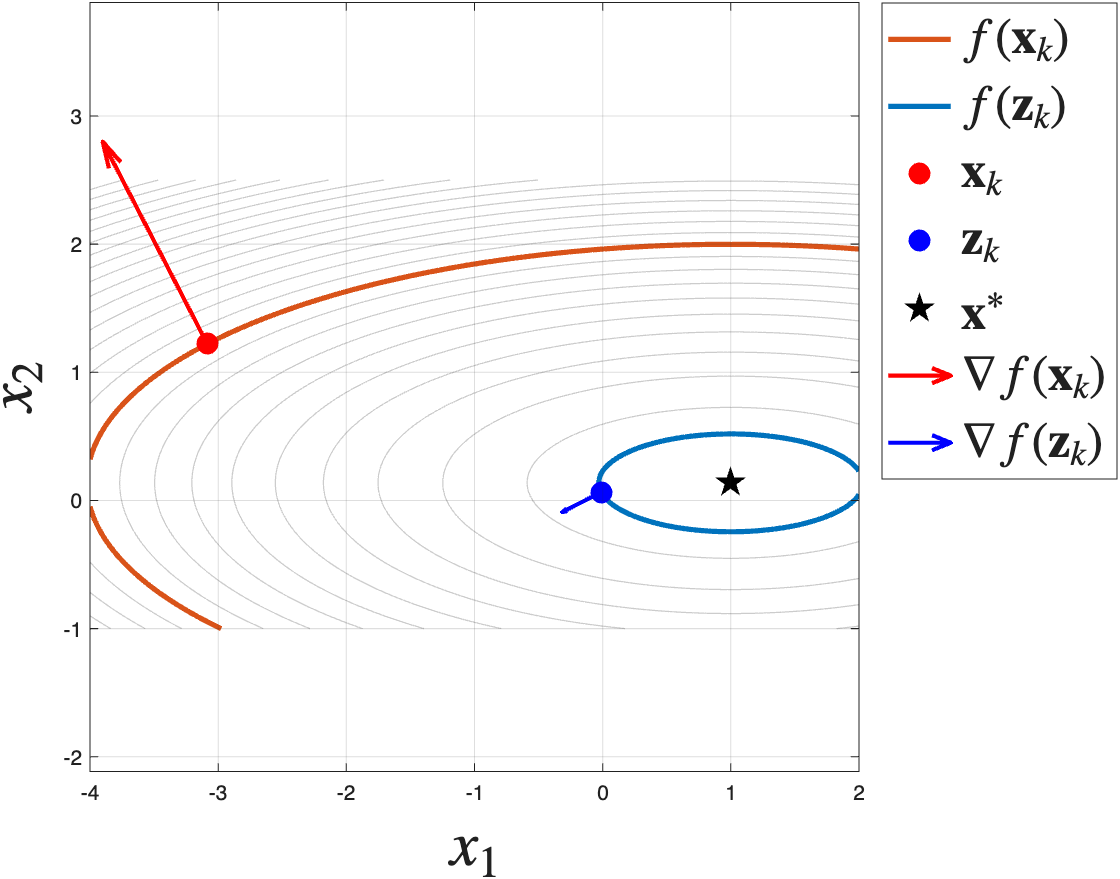}
    \caption{Left: both $\wt\alpha_k$ and $\wt\beta_k$ are positive. 
    Right: $\wt\alpha_k \le 0$ and $\wt\beta_k \ge 0$.}
    \label{fig:cases}
\end{figure}

Notice that, because of \eqref{beta}, at least one of $\wt\alpha_k$ and $\wt\beta_k$ will be positive, and if, in addition, is $f(\bx_k)=f(\bz_k)$, both will be positive and inversely proportional to the norms of the gradients:
\[
\begin{array}{rll}
\wt\alpha_k & = \tfrac{(\bx_k-\bz_k)^\top A \, (\bx_k-\bz_k) }{2 \, \|\nabla f(\bx_k) \|} & = \tfrac{(\bx_k-\bz_k)^\top A \, (\nabla f(\bx_k)-\nabla f(\bz_k)) }{2 \, \|\nabla f(\bx_k) \|}, \\[2mm]
\wt\beta_k & = \tfrac{(\bx_k-\bz_k)^\top A \, (\bx_k-\bz_k) }{2 \, \|\nabla f(\bz_k) \|} & = \tfrac{(\bx_k-\bz_k)^\top A \, (\nabla f(\bx_k)-\nabla f(\bz_k)) }{2 \, \|\nabla f(\bz_k) \|}.
\end{array}
\]
Since we want to avoid starting with parallel search directions, selecting an initial point $\bz_0$ that enforces orthogonality of $\bp_k$ and $\bq_k$ is a natural method to ensure this.
While this strategy seems to works very well in our experiments, we emphasize that identifying alternative good choices for $\bz_0$ remains an open topic for future research.
The details of our orthogonal initialization are explained in the next subsection.

\subsection{Selection of $\bz_0$ given $\bx_0$} \label{sec:z0}
As we will see in Section~\ref{sec:experiments}, the Twin method can yield very promising computational results, for both quadratic and general functions. Nevertheless, we wish to emphasise that the appropriate choice of the auxiliary starting point $\bz_0$ is crucial for an efficient implementation. In particular, this choice should favour the dual process, so as to achieve a significant reduction in the objective function. The analysis presented so far shows that collinearity between the directions should be avoided whenever possible. It therefore seems natural to choose $\bz_0$ such that
\begin{equation}
\label{orthogonality}
\nabla f(\bx_0)^\top\nabla f(\bz_0) = 0.
\end{equation}
We begin with the quadratic case. 
First, we select a random direction $\bv \in \R^n$ drawn from a standard normal distribution. Then, we compute the stepsize $\theta$ along $\bv$ such that the gradient at  $\bz_0 = \theta \bv$ is orthogonal to the initial gradient $\nabla f(\bx_0)$. By imposing the orthogonality condition $\nabla f(\theta \, \bv)^{\top} \, \nabla f(\bx_0) = 0$, we obtain
\[
\left(\theta \, A \bv - \bb \right)^{\top} (A\bx_0-\bb) = \wt\bp_0^{\top} \left( \theta \, A \bv - \bb \right) = 0.
\]
Solving for $\theta$, we derive the analytic expression for the orthogonal step:
\[
\theta = \frac{\bp_0^{\top} \, \bb}{\bp_0^{\top} A \bv}.
\]
For the general case, we follow a procedure entirely analogous to the one used for the quadratic case, with the understanding that condition \eqref{orthogonality} cannot, of course, be enforced exactly.
Again, we start with the ansatz $\bz_0 = \theta \bv$.
For the general nonlinear case, where $f \in C^2$, we approximate the orthogonality condition $\nabla f(\bx_0)^\top \nabla f(\bz_0) = 0$ using a first-order Taylor expansion:
$
\nabla f(\bz_0) \approx \nabla f(\bx_0) + H_0\,(\bz_0 - \bx_0) =  \theta \, H_0\bv - H_0\bx_0 +\nabla f(\bx_0) ,
$
where $H_0$ is the Hessian matrix of $f$ at $\bx_0$.
Substituting this into the orthogonality condition and neglecting higher-order terms gives
\[
\nabla f(\bx_0)^\top \bigl( \nabla f(\bx_0) + \theta \, H_0\bv - H_0\,\bx_0 \bigr) = 0,
\]
which yields
\[
\theta = \frac{\nabla f(\bx_0)^\top [H_0\bx_0- \nabla f(\bx_0)]}{\nabla f(\bx_0)^\top H_0\bv}.
\]
For the quantities $H_0\bv$ and $H_0\bx_0$  a secant condition based estimation can be computed:
\[ H_0\bx_0= H_0(\bx_0-\mathbf{0}) \approx \nabla f(\bx_0)-\nabla f(\mathbf{0}),
\qquad
H_0\bv= H_0((\bx_0+\bv)-\bx_0) \approx \nabla f(\bx_0+\bv)-\nabla f(\bx_{0})\]
which leads to the following expression for $\theta$
\[
\theta= - \frac{\nabla f(\bx_0)^\top\nabla f(\mathbf{0})}{\nabla f(\bx_0)^\top(\nabla f(\bx_0+\bv)-\nabla f(\bx_{0}))}.
\]
This approximation does not require a finite-difference parameter and uses historical gradient information. Now that the initialization for the auxiliary process has been established, the next section will formalize the complete hybrid methodology, building on these practical setups.

\section{A hybrid method}
\label{sec:algo}
The analysis conducted so far suggests that the main issue lies in the potential near-parallelism that eventually arises between the search directions. This drawback can be mitigated by the use of a damping factor. However, in our experience, the method is not recommended as a standalone approach. The method proves very effective mainly when the iterates are far from the solution; therefore, to exploit this feature, we propose a hybrid framework that grafts a speedup scheme onto ABB${\min}$, analogous to the approach in \cite{DeMagistrisCOAP}. The key distinction is that, whereas their method continuously alternates between acceleration and ABB$_{\min}$, ours switches to ABB$_{\min}$ when parallelism arises between $\bp_k$ and $\bq_k$. Although the framework could in principle be combined with any method, we focus on ABB$_{\min}$ since it is a standard and widely used choice that has shown practical performance.

\subsection{Twin-ABB$_{\min}$ for the quadratic case }
To avoid the complexity of tuning a damping parameter and to leverage the efficiency of spectral methods, we propose the Twin-ABB$_{\min}$ framework with a \textit{restart} mechanism. In other words, the algorithm begins with a Twin phase, applying a restart mechanism whenever the two processes become too close or their gradients approach collinearity. If the restart fails to get out from those critical situations, the method switches to ABB$_{\min}$.

The algorithm starts with the Twin method to exploit geometric cooperation during the early phase. At each iteration, we check the quality of the coupling through two distinct metrics:

\begin{enumerate}
\label{des:mec}
\item 
Once a threshold value for $ \gammac$ has been set, the angle between the search processes is evaluated to ensure compliance with the condition set out in Assumption 1. If $|\gamma_k| > \gammac$, the directions are considered parallel.

\item We check the contraction rate of the mutual distance $\delta_k = \|\bd_k\|$. We define the contraction ratio as $\rho_k = \delta_k / \delta_{k-1}$. If $\rho_k > \rhoc$, it indicates that the mutual attraction has stagnated, and the dual interaction mechanism is no longer driving significant convergence.
\end{enumerate}
If any of these conditions occurs, the algorithm does not immediately terminate the dual process. $\bx_k$ is set equal to the iteration that yields the lowest objective function value up to that point, whilst $\bz_k$ is computed as $\bz_k=\bx_k-\alpha_k^{\text{BB1}}\nabla f (\bx_k)$. The aim of the \textit{restart} is to attempt to restore favorable conditions for the algorithm — in particular, to escape any possible collinearity between the search directions. Rather than imposing a fixed upper limit on the number of restarts, we adopt a dynamic mechanism to avoid unproductive deadlocks. A \textit{restart} is considered successful only if it allows progress to be made for at least a few iterations. If geometric collinearity reoccurs immediately after a restart, further corrections would lead to a cycle of ineffective updates. Once such consecutive violations are detected, the geometric acceleration of the Twin strategy is considered exhausted, and the method switches to the Adaptive Barzilai–Borwein (ABB$_{\min}$) scheme~\cite{Frassoldati:2008}. 
The full procedure is reported in Algorithm~\ref{alg:TwinABB}. 

\begin{algorithm}[htb!]
\caption{A hybrid Twin-ABB$_{\min}$ method with restart}
\label{alg:TwinABB}
\footnotesize
\begin{algorithmic}[1]
    \State \textbf{Initialization:} $\bx_0, \bz_0 \in \R^n$; ${\sf tol} > 0$; $ {\sf maxit} \in \mathbb{N}$; $\gammac,\rhoc \ \in (0,1)$;
    \State $\delta_0 = \|\bx_0 - \bz_0\|$; $\text{switched} = {\sf false}$
    \State $k^{\text{r}} \leftarrow  -1 $ 
    \For{$k = 0, \dots, {\sf maxit}$}
        \State Compute $\bp_k$ and $\bq_k$ using \eqref{eq:normalized}
        \State $\delta_k = \|\bx_k - \bz_k\|$; \quad $\gamma_k = \bp_k^{\top} \,  \bq_k$; \quad $\rho_k = \delta_k \, / \, \delta_{k-1}$  

        \If{ $(k > 1)$ \textbf{and} $(\rho_k > \rhoc \textbf{ or } \gamma_k > \gammac)$}  \Comment{Check  Conditions}
        \If{  $k  \neq k^{\text{r}}+1$ }
       \State $\bx_k =  \text{best}(\bx_k, \bz_k), \quad \bz_k = \bx_k + \alpha_k^{\text{BB1}}  \, \bp_k$  
 \Comment{Keep best point w.r.t.~$f$-value and reset $\bz$ via BB1 step}
 \State $k^{\text{r}} = k $
            \State  {\bf continue} 
        \Else \Comment{Switch to ABB}
            \State $\text{switched} = {\sf true}$; \ \textbf{break} 
        \EndIf
    \EndIf
             \State Compute $(\alpha_k, \beta_k)$ from~\eqref{eq:final_step_selection}
            \State $\bx_{k+1} = \bx_k +  \, \alpha_k \, \bp_k$, \quad $\bz_{k+1} = \bz_k + \, \beta_k \, \bq_k$   
            \State {\bf if} $\min \{\|\nabla f(\bx_k)\|, \, \|\nabla f(\bz_k)\| \} \le {\sf tol} \cdot \|\nabla f(\bx_0)\| $, {\bf break}; \ {\bf end if}
        \EndFor
        
        \If{ $\text{switched}$ } \Comment{Stage 2: ABB$_{\min}$ Phase}
            \State $\bx_k = \frac12 (\bx_k + \bz_k)$
            \For{ $j = k+1, \dots, {\sf maxit}$ }
                \State Compute $\beta_j^{\text{ABB}}$ using ABB$_{\min}$ 
                \State $\bx_{j+1} = \bx_j - \beta_j^{\text{ABB}} \, \nabla f(\bx_j)$
                \State {\bf if} {\tt stopping criteria} , {\bf break}; \ {\bf end if}
            \EndFor
        \EndIf
    \end{algorithmic}
\end{algorithm}

Specifically, for the {\tt stopping criteria} in line~27 we take
\[\|\nabla f(\bx_k)\| \le 10^{-6} \cdot \|\nabla f(\bx_0)\| \quad \text{or} \quad |f(\bx_k)-f(\bx_{k-1})| \le 10^{-9} \cdot \, |f(\bx_k)|.\]

\begin{remark} \rm
It is important to clarify the practical role of the damping factor $\eta_k$.
For the basic Twin method, damping is required both in the theoretical analysis (to prove convergence, see Proposition~\ref{thm:asym_orth_conv}) and in numerical experiments to prevent early divergence or stagnation.
However, in the context of the hybrid framework Twin-ABB$_{\min}$ applied to problems with $n > 2$, we observed that explicit damping is unnecessary for practical convergence. The switch mechanism to ABB$_{\min}$ naturally handles cases where the  Twin becomes ineffective.
An exception remains for two-dimensional problems ($n=2$), where $\eta_k < 1$ is always required; otherwise, the exact Twin with $\eta_k = \eta =1$ would typically lead to an immediate intersection of the search lines, causing the Twin process to stop unsuccessfully after a single iteration.
\end{remark}

\subsection{Twin-ABB$_{\min}$ for general functions \eqref{eq:general_problem}}
\label{sec:nonquadcase}

Building upon the quadratic framework, we now extend the practical implementation of Twin-ABB$_{\min}$ to general unconstrained optimization problems. The two-phase architecture and the stability monitoring mechanism are identical to those described in Section~\ref{des:mec}. 
The main difference lies in the selection of the stepsize. Since the optimal steps derived from the unconstrained Twin-step system \eqref{eq:system2x2_normalized} do not guarantee a sufficient decrease for nonquadratic functions, they are instead used as initial trial steps ($\alpha_k^{\text{Twin}}, \beta_k^{\text{Twin}}$) for a nonmonotone line search. This strategy, highly effective for spectral gradient methods \cite{Grippo:1986}, enforces stable descent. Specifically, we search for $\alpha_k = \alpha_k^{\text{Twin}} \upsilon^j$ (with $\upsilon \in (0,1)$ and $j=0, 1, \dots$) satisfying the nonmonotone Armijo condition:
\begin{equation}
\label{eq:nonmonotone_cond}
f(\bx_k + \alpha_k \, \bp_k) \le \max_{0 \, \le \, i \, \le \, \min(k, M)} f(\bx_{k-i}) + \nu \, \alpha_k \, \nabla f(\bx_k)^{\top} \, \bp_k,
\end{equation}
where $\nu \in (0, 1)$ is a small constant and $M$ is a nonnegative integer determining the memory length. The same procedure is applied independently to determine the step $\beta_k$ for the auxiliary process $\bz_k$, using $\beta_k^{\text{Twin}}$ as the starting guess. The use of the Twin solution as the initial trial step allows the algorithm to capture the local curvature information shared between the two processes, often reducing the number of backtracking operations required by the line search. While the Twin strategy is effective in the early stages of optimization, particularly for navigating narrow valleys, maintaining two coupled processes becomes computationally redundant as the iterates converge to the solution or when the search directions become collinear. The complete procedure is summarized in Algorithm~\ref{alg:Twin_nonquad}.

\begin{algorithm}[ht]
\caption{A hybrid Twin-ABB$_{\min}$ method for general functions}
\label{alg:Twin_nonquad}
\footnotesize
\begin{algorithmic}[1]
    \State \textbf{Initialization:} $\bx_0, \bz_0 \in \R^n$;  ${\sf tol} > 0$; ${\sf tol_f} >0$ ; $ {\sf maxit},M \in \mathbb{N}$; $\gammac,\rhoc \ \in (0,1)$;

    \State $\delta_0 = \|\bx_0 - \bz_0\|$; $\text{switched} = {\sf false}$
    \State $k^{\text{r}} \leftarrow -1$

     \For{$k = 0, \dots, {\sf maxit}$}
       \State Compute $\bp_k$ and $\bq_k$ using \eqref{eq:normalized}
       \State $\delta_k = \|\bx_k - \bz_k\|$; \quad $\gamma_k = \bp_k^{\top} \,  \bq_k$; \quad $\rho_k = \delta_k \, / \, \delta_{k-1}$
        
        \If{ $k > 1$ \textbf{and} $(\rho_k > \rhoc \text{ or } \gamma_k > \gammac$)} 
         \Comment{Check  Conditions}
              \If{$k \neq k^{\text{r}}+1$ }
       
                 \State $\bx_k =  \text{best}(\bx_k, \bz_k), \quad \bz_k = \bx_k + \alpha_k^{\text{BB1}}  \, \bp_k$  
 \Comment{Keep best point w.r.t.~$f$-value and reset $\bz$ via BB1 step}
 \State $k^{\text{r}} = k $
                \State  {\bf continue}
            \Else
                \State switched = {\sf true}; \ \textbf{break}  \Comment{Switch to ABB}
            \EndIf
        \EndIf
         \State Compute $(\alpha_k^{\text{Twin}}, \beta_k^{\text{Twin}})$ from~\eqref{eq:final_step_selection}
        \State Find $\alpha_k, \, \beta_k$ starting from $\alpha_k^{\text{Twin}}, \, \beta_k^{\text{Twin}}$ satisfying \eqref{eq:nonmonotone_cond}
        \State $\bx_{k+1} = \bx_k +  \, \alpha_k \, \bp_k$ ; \quad $\bz_{k+1} = \bz_k + \, \beta_k \, \bq_k$   
     \State {\bf if} $\min \{\|\nabla f(\bx_k)\|, \, \|\nabla f(\bz_k)\| \} \le {\sf tol} \cdot \|\nabla f(\bx_0)\| $, {\bf break}; \ {\bf end if}
        \EndFor
    
    \If{ switched } \Comment{Phase 2: ABB$_{\min}$ Phase}
        \State $\bx_k = \tfrac12 \, (\bx_k + \bz_k)$ 
        \For{ $j = k+1, \dots, {\sf maxit}$ }
            \State Compute stepsize $\beta_j^{\text{ABB}}$ using ABB$_{\min}$ 
             \State Find $\beta_j$ starting from $\beta_j^{\text{ABB}}$ satisfying \eqref{eq:nonmonotone_cond}
            \State $\bx_{j+1} = \bx_j - \beta_j \, \nabla f(\bx_j)$
            \State {\bf if} {\tt stopping criteria} , {\bf break}; \ {\bf end if}
        \EndFor
    \EndIf
\end{algorithmic}
\end{algorithm}

Specifically, for the {\tt stopping criteria} in line~29 is the same of the quadratic case. The global convergence of the proposed hybrid method relies on its structural design. Since the Twin phase limits the number of restarts, the algorithm either converges within the Twin phase or executes it for a finite number of iterations before switching to the ABB$_{\min}$ phase. Therefore, the asymptotic behavior is entirely governed by the single-process phase. Suppose that $f$ is bounded below in $\R^n$ and that $f$ is continuously differentiable in an open set $\mathcal{N}$ containing the level set $\mathcal{L} = \{\bx: f(\bx) \le f(\bx_k)\}$, where $\bx_k$ is the initial point of the ABB$_{\min}$ phase. Assume also that the gradient $\bg = \nabla f$ is Lipschitz continuous on $\mathcal{N}$.  
In the second phase, the search direction is the steepest descent direction $-\nabla f(\bx_k)$, meaning the angle with the negative gradient is exactly zero, naturally satisfying the Zoutendijk condition. The stepsizes are selected via the nonmonotone backtracking procedure \eqref{eq:nonmonotone_cond}. Consequently, applying the standard convergence theory for nonmonotone line search methods (see \cite{Grippo:1986} and \cite[Thm.~3.2]{NocedalWright2006}), it holds that the hybrid Twin-ABB$_{\min}$ algorithm globally converges: 
\[
\lim_{k \to \infty} \nabla f(\bx_k) = \textbf{0}.
\]
This establishes that the Twin-ABB$_{\min}$ framework successfully preserves the global convergence guarantees of spectral methods, while effectively leveraging the dual-path geometric acceleration during the critical early stages of optimization.

\section{Numerical experiments}
\label{sec:experiments}
To assess the efficiency of the proposed Twin-ABB$_{\min}$ strategy, we compare it with ABB$_{\min}$ method \cite{Frassoldati:2008}.
Since our algorithm falls back to ABB$_{\min}$ once the Twin coupling is no longer beneficial, the comparison specifically highlights the acceleration provided by the initial Twin phase. We use the performance profile technique introduced by Dolan and Moré \cite{DolanMore2002}, comparing the algorithms based on the number of function evaluations or number of gradient evaluations required to satisfy the stopping criteria. 

\subsection{Quadratic case}
The test suite consists of a set of strictly convex problems of the form \eqref{eq:quadratic_problem}, randomly generated using MATLAB. To ensure a challenging test environment, we varied the problem dimension $n \in \{1000, 5000, 10000\}$ and the condition number $\kappa(A) \in \{10^4, 10^5, 10^6, 10^7\}$. 
The experiments were performed in the MATLAB R2025b environment on a 14-inch MacBook Pro equipped with an Apple M3 Pro chip and 18 GB of RAM, running on macOS Sequoia (Version 15.6.1).
To evaluate performance of the methods against different curvature geometries, following \cite{di_serafino2018, DeMagistrisCOAP, Asmudis:2013}, we designed the hessian $A$ to exhibit three distinct spectral distributions.
We consider three eigenvalue distributions for the Hessian $A$. 
\textit{Bimodal spectrum}: eigenvalues are clustered at the extremes of $[1, \kappa]$, with half uniformly sampled in $[1, \, 0.2\kappa]$ and the other half in $[0.8\kappa, \, \kappa]$. 
\textit{Logarithmic distribution}: eigenvalues are geometrically spaced within the interval. 
\textit{Linear distribution}: eigenvalues progress uniformly across the spectrum. 

For the linear term and the solution, we implement two distinct generation strategies to increase dataset variability. 
In \textit{Strategy A}, the exact minimizer $\bx^*$ is predetermined with components randomly drawn from $[-5, 5]$ using three distributions: uniform, normal, and sparse normal (density $0.4$). The vector $\bb$ is then computed as $\bb = A \bx^*$. In \textit{Strategy B}, we set the right-hand side vector $\bb$ to a vector of all ones, implicitly defining the solution through the linear system $A\bx = \bb$. The starting point $\bx_0$ is generated uniformly at random with components drawn from $[-5, 5]$. We perform 5 runs for each problem configuration with different random seeds, resulting in a total of 720 test instances. The stopping criteria is $\|\nabla f(\bx_k)\| \le 10^{-7} \cdot \|\nabla f(\bx_0)\|$, with a maximum of $8n$ iterations. The internal parameters for Twin-ABB$_{\min}$ are set to $\bar\rho = 0.9$ and $\bar\gamma = 0.9$, while the ABB$_{\min}$ parameters follow the configuration suggested in \cite{Frassoldati:2008}.

\begin{figure}[htbp]
    \centering

\includegraphics[width=0.32\textwidth]{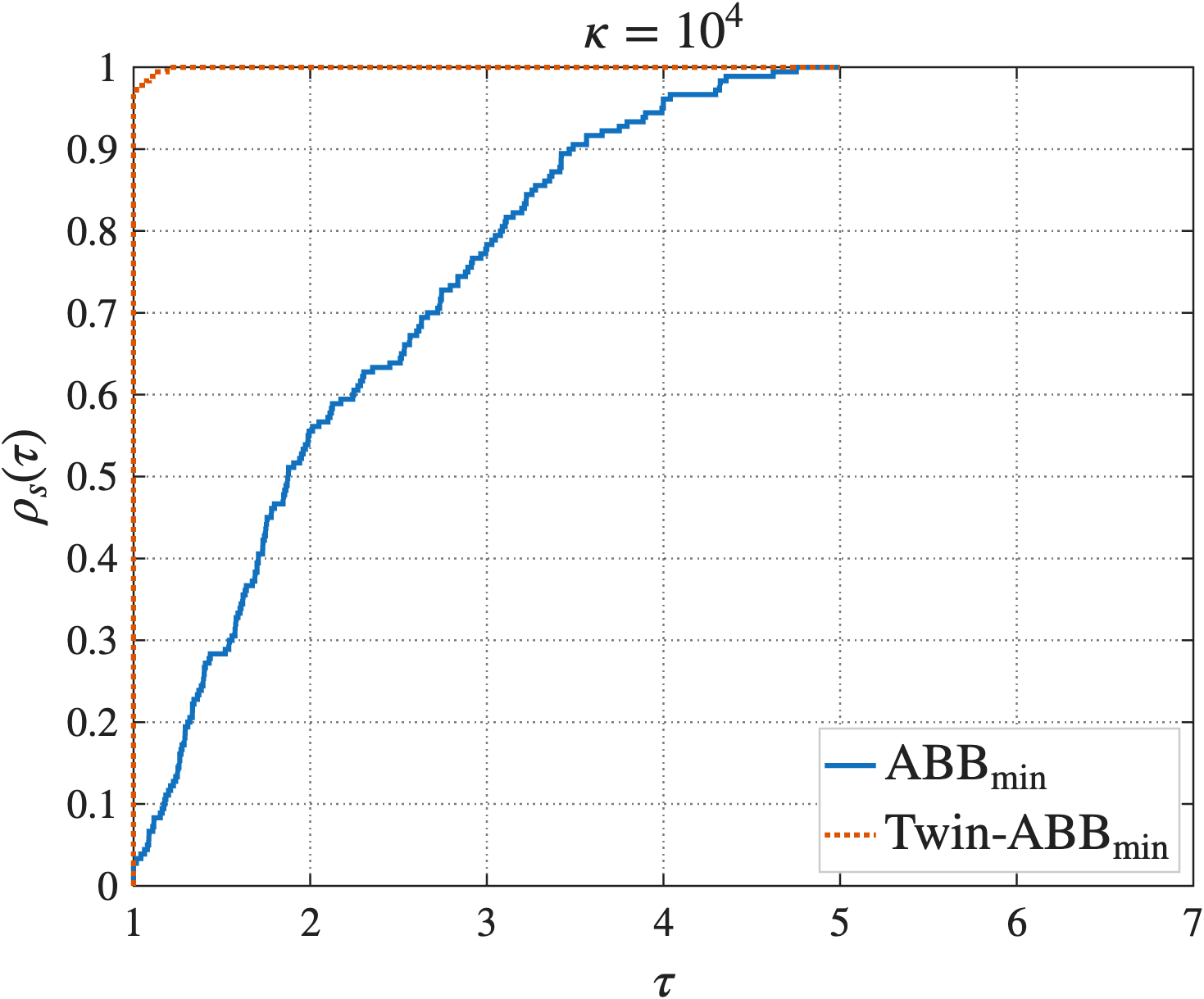}
\includegraphics[width=0.32\textwidth]{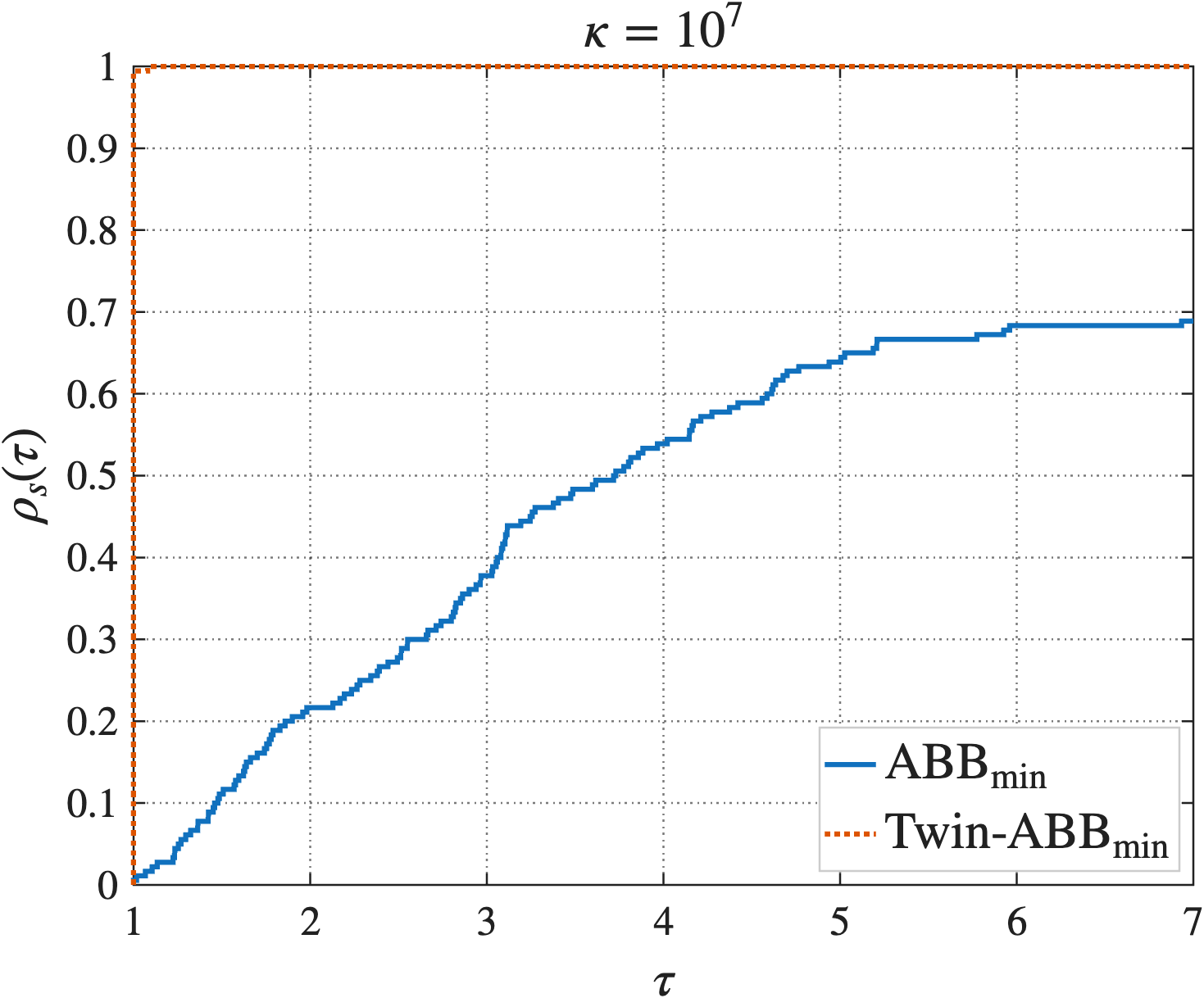}
\includegraphics[width=0.32\textwidth]{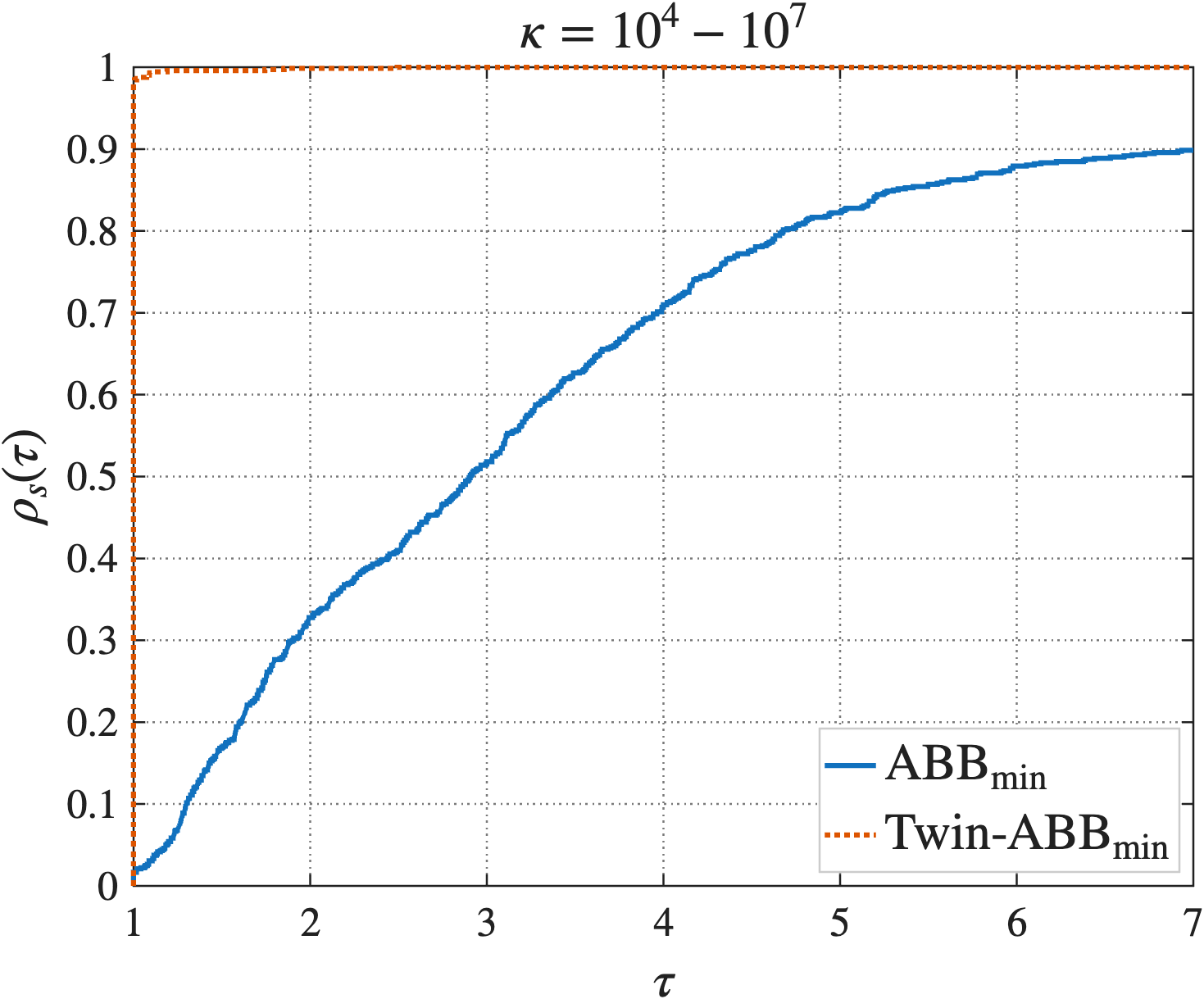}

      \caption{ Performance profile of $\rm ABB_{\min}$ and Twin-ABB$_{\min}$ on $720$ problems, in terms of number of gradient evaluations.}
    \label{fig:perf_profile}
\end{figure}

\begin{figure}[htbp]
    \centering 
 \includegraphics[width=0.49\textwidth]{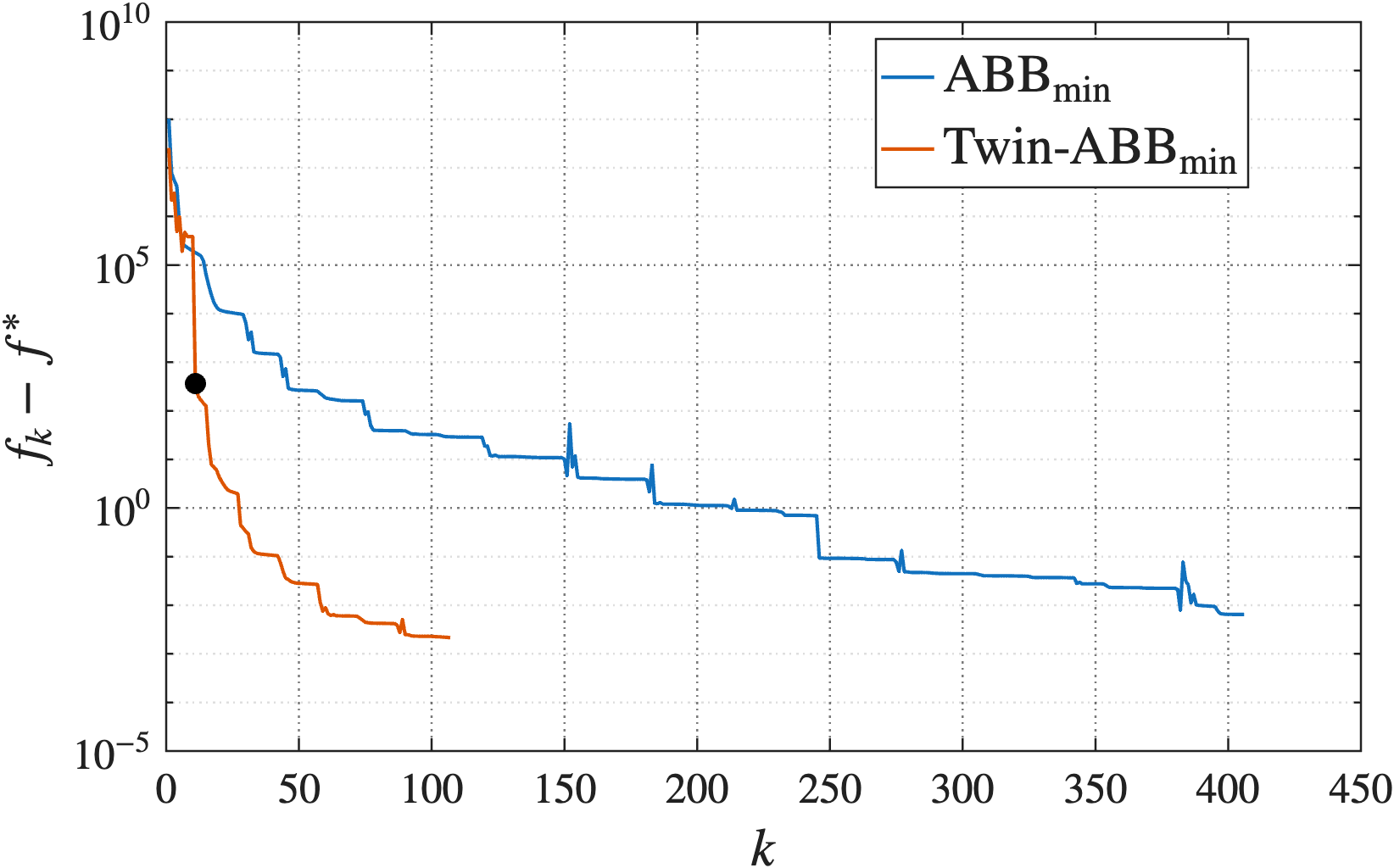}
  \hfill
\includegraphics[width=0.49\textwidth]{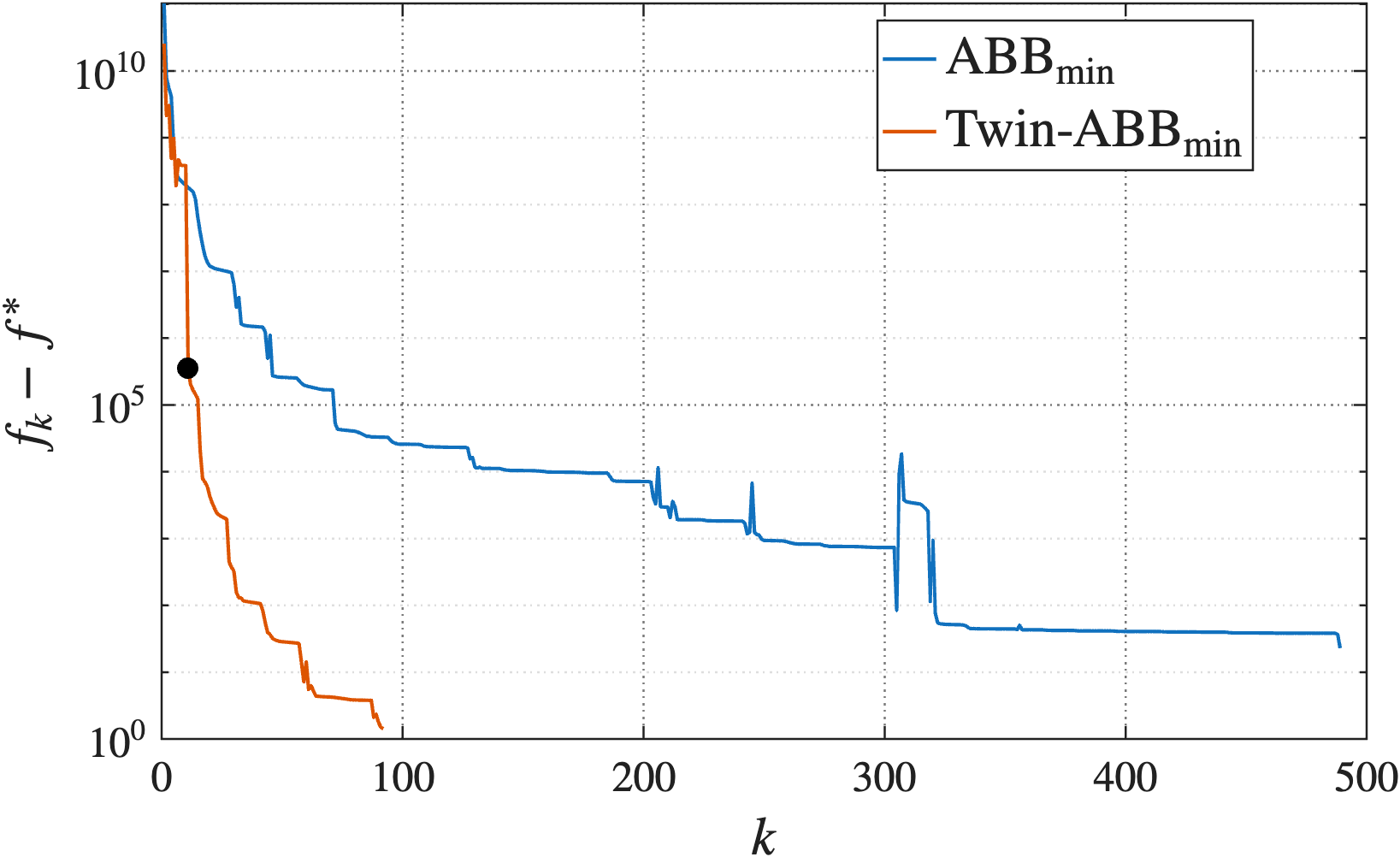}

    \caption{$\rm ABB_{\min}$ VS Twin-ABB$_{\min}$ on a problem with $n=1000$ and $\kappa(A)=10^4$ for the figure on the left, and $\kappa(A)=10^7$ for the figure on the right. The black dot represents the iterates in witch the method switches to ABB$_{\min}$, ($f_k=\min \{f(\bx_k),f(\bz_k)\})$.}
    \label{fig:convergence_history}
\end{figure}

The computational results are summarized in Figure~\ref{fig:perf_profile}.  Note that all the comparisons were made in terms  of  gradient evaluation. A comparison in terms of iterations would have been unfair, since a Twin iteration is twice more expensive compared to ABB$_{\min}$. It can be observed that the Twin steps  significantly improve the performance of Algorithm ABB$_{\min}$, with a particularly marked difference when considering problems with higher conditioning. 
 Furthermore, the performance curve of Twin-ABB$_{\min}$ remains above that of ABB$_{\min}$ for all values of the performance ratio $\tau$.
The effect of the Twin steps in the two phases (Twin and ABB$_{\min}$) is clearly illustrated in Figure~\ref{fig:convergence_history}, which shows the convergence history of the two algorithms on two problems with $10^4$ variables, with condition numbers of $10^4$ and $10^7$, respectively. 
The first phase creates a favorable warm-up for the second phase, from which the ABB$_{\min}$ algorithm derives significant benefits.

\subsection{General case}
To evaluate performance of the proposed algorithm on general nonlinear landscapes, we considered a comprehensive test suite selected from the CUTEst library. Specifically, we utilized the OPM collection described by Gratton and Toint \cite{GrattonToint2021}, along with additional problems collected by Andrei \cite{Andrei2008}. The OPM collection provides a direct MATLAB interface for  CUTEst problems, allowing for seamless integration without external Fortran compilation.
The test set consists of $164$ unconstrained optimization problems characterized by different properties, including ill-conditioning, nonconvexity, and variable dimensions ranging from $n=2$ to $n=10000$. The specific dimensions for each problem are selected to ensure a balanced mix of small-scale, medium-scale, and large-scale scenarios. Table~\ref{tab:cutest_problems} lists the complete set of problems used in the experimentation, detailing their ID, name, and the dimension $n$ adopted.
\begin{table}[htb!]
\centering
\scalebox{0.6}{
\begin{tabular}{rlr|rlr|rlr|rlr} \hline
\textbf{ID} & \textbf{Problem Name} & \textbf{$n$} & \textbf{ID} & \textbf{Problem Name} & \textbf{$n$} & \textbf{ID} & \textbf{Problem Name} & \textbf{$n$} & \textbf{ID} & \textbf{Problem Name} & \textbf{$n$} \\ \hline \rule{0pt}{2.3ex}%
1 & Almost Pert. Quadratic & 1000 & 2 & ARGAUSS & 3 & 3 & ARGLINA & 10 & 4 & ARGLINB & 10 \\
5 & ARGLINC & 10 & 6 & ARGTRIG & 10 & 7 & ARWHEAD & 10 & 8 & BARD & 3 \\
9 & BDARWHD & 100 & 10 & Bdexp & 1000 & 11 & BDQRTIC & 1000 & 12 & BEALE & 2 \\
13 & Biggsb1 & 1000 & 14 & Biggs6 & 6 & 15 & Booth & 2 & 16 & Box3 & 3 \\
17 & Brkmcc & 2 & 18 & Brownal & 2 & 19 & Brownbs & 2 & 20 & Brownden & 4 \\
21 & Broyden Tridiagonal & 10 & 22 & Broydenbd & 10 & 23 & Chandheu & 57 & 24 & Chebyqad & 10 \\
25 & Cliff & 2 & 26 & Clustr & 2 & 27 & Cosine & 10000 & 28 & Crglvy & 10 \\
29 & CUBE & 10 & 30 & Curly10 & 30 & 31 & Curly20 & 30 & 32 & Curly30 & 40 \\
33 & Deconvu & 51 & 34 & Diagonal 1 & 1000 & 35 & Diagonal 2 & 1000 & 36 & Diagonal 3 & 1000 \\
37 & Diagonal 4 & 1000 & 38 & Diagonal 5 & 1000 & 39 & Diagonal 6 & 1000 & 40 & Diagonal 7 & 1000 \\
41 & Diagonal 8 & 1000 & 42 & Diagonal 9 & 1000 & 43 & DIXMAANA & 900 & 44 & DIXMAANB & 900 \\
45 & DIXMAANC & 900 & 46 & DIXMAAND & 900 & 47 & DIXMAANE & 900 & 48 & DIXMAANF & 900 \\
49 & DIXMAANG & 900 & 50 & DIXMAANH & 900 & 51 & DIXMAANI & 900 & 52 & DIXMAANJ & 900 \\
53 & DIXMAANK & 900 & 54 & DIXMAANL & 900 & 55 & DIXON3DQ & 10 & 56 & DQDRTIC & 1000 \\
57 & Dqrtic & 10 & 58 & EDENSCH & 1000 & 59 & EG2 & 10 & 60 & Eg2s & 10 \\
61 & Eigenals & 110 & 62 & Eigenbls & 110 & 63 & Eigencls & 462 & 64 & ENGVAL1 & 10 \\
65 & ENGVAL2 & 3 & 66 & Expfit & 2 & 67 & Explin1 & 1000 & 68 & Explin2 & 1000 \\
69 & Extended BD1 & 1000 & 70 & Extended Cliff & 1000 & 71 & Extended Hiebert & 1000 & 72 & Ext. Himmelblau & 1000 \\
73 & Extended Maratos & 1000 & 74 & Extended Powell & 1000 & 75 & Extended PSC1 & 1000 & 76 & Ext. Quad. Exp. EP1 & 1000 \\
77 & Ext. Quad. Penalty QP1 & 1000 & 78 & Ext. Quad. Penalty QP2 & 1000 & 79 & Extended TET & 1000 & 80 & Ext. Tridiagonal 1 & 1000 \\
81 & Ext. Tridiagonal 2 & 1000 & 82 & Extended Wood & 1000 & 83 & Extended Beale & 1000 & 84 & Ext. Denschnb & 1000 \\
85 & Ext. Denschnf & 1000 & 86 & Ext. Freud. Roth & 1000 & 87 & Extended Penalty & 1000 & 88 & Ext. Rosenbrock & 1000 \\
89 & Ext. Trigonometric & 1000 & 90 & Ext. White \& Holst & 1000 & 91 & FLETCBV3 & 1000 & 92 & FLETCHCR & 1000 \\
93 & Fminsurf & 1024 & 94 & Full Hessian FH1 & 1000 & 95 & Full Hessian FH2 & 1000 & 96 & Full Hessian FH3 & 1000 \\
97 & Generalized PSC1 & 1000 & 98 & Generalized Quartic & 1000 & 99 & Gen. White \& Holst & 1000 & 100 & Gen. Rosenbrock & 1000 \\
101 & GENHUMPS & 10 & 102 & Gottfr & 2 & 103 & Gulf & 3 & 104 & Hager & 1000 \\
105 & Hairy & 2 & 106 & HARKERP2 & 1000 & 107 & Helix & 10 & 108 & Himmelh & 1000 \\
109 & Himmelbg & 1000 & 110 & INDEF & 1000 & 111 & Integreq & 10 & 112 & Jensmp & 2 \\
113 & Kowosb & 4 & 114 & LIARWHD & 1000 & 115 & Mancino & 10 & 116 & Mexhat & 2 \\
117 & Meyer3 & 3 & 118 & Mccormck & 1000 & 119 & Msqrtals & 16 & 120 & Msqrtbls & 16 \\
121 & Ncb20b & 21 & 122 & Ncb20c & 30 & 123 & NONDIA & 1000 & 124 & NONDQUAR & 1000 \\
125 & NONSCOMP & 1000 & 126 & Nzf1 & 13 & 127 & Osbornea & 5 & 128 & Osborneb & 11 \\
129 & Partial Pert. Quad. & 1000 & 130 & Penalty 1 & 10 & 131 & Penalty 2 & 10 & 132 & Penalty 3 & 10 \\
133 & Pert. Quad. Diagonal & 1000 & 134 & Pert. Trid. Quad. & 1000 & 135 & Perturbed Quadratic & 1000 & 136 & Powellbs & 2 \\
137 & Powellsg & 4 & 138 & Powellsq & 2 & 139 & Power & 10 & 140 & Quadratic QF1 & 1000 \\
141 & Quadratic QF2 & 1000 & 142 & QUARTC & 1000 & 143 & Raydan 1 & 1000 & 144 & Raydan 2 & 1000 \\
145 & Recipe & 3 & 146 & Rosenbr & 10 & 147 & S308 & 2 & 148 & Schmvett & 3 \\
149 & Scurly 10 & 30 & 150 & Scurly 20 & 30 & 151 & SINCOS & 1000 & 152 & Sine & 1000 \\
153 & SINQUAD & 1000 & 154 & Sisser & 2 & 155 & Staircase 1 & 1000 & 156 & Staircase 2 & 1000 \\
157 & TRIDIA & 1000 & 158 & Tridiagonal 1 & 1000 & 159 & Tridiagonal 2 & 1000 & 160 & VARDIM & 1000 \\
161 & Woods & 12 & 162 & Yfitu & 3 & 163 & Zangwil2 & 2 & 164 & Zangwil3 & 3 \\ \hline
\end{tabular}
}
\caption{\footnotesize \rm{Complete list of the 164 test problems from the OPM/CUTEst collection utilized in the experiments.}} 
\label{tab:cutest_problems}
\end{table}

The numerical experiments are conducted using the  starting points $\bx_0$ provided by the CUTEst definition for each problem. A maximum limit of 10000 gradient evaluations is imposed. From the initial set listed in Table~\ref{tab:cutest_problems}, we excluded a small (12) subset of instances where numerical overflows (NaN) occurred or where the objective function is undefined in the search region. We also excluded 12 problems in which the 2 algorithms reached different solutions. To provide a comprehensive overview of the comparison between the two methods, we present separate performance profiles for the computation time, the number of gradient evaluations, the number of function evaluations, and the number of iterations. 
\begin{figure}[htb!]
\centering
\includegraphics[width=0.4\textwidth, height=4cm]{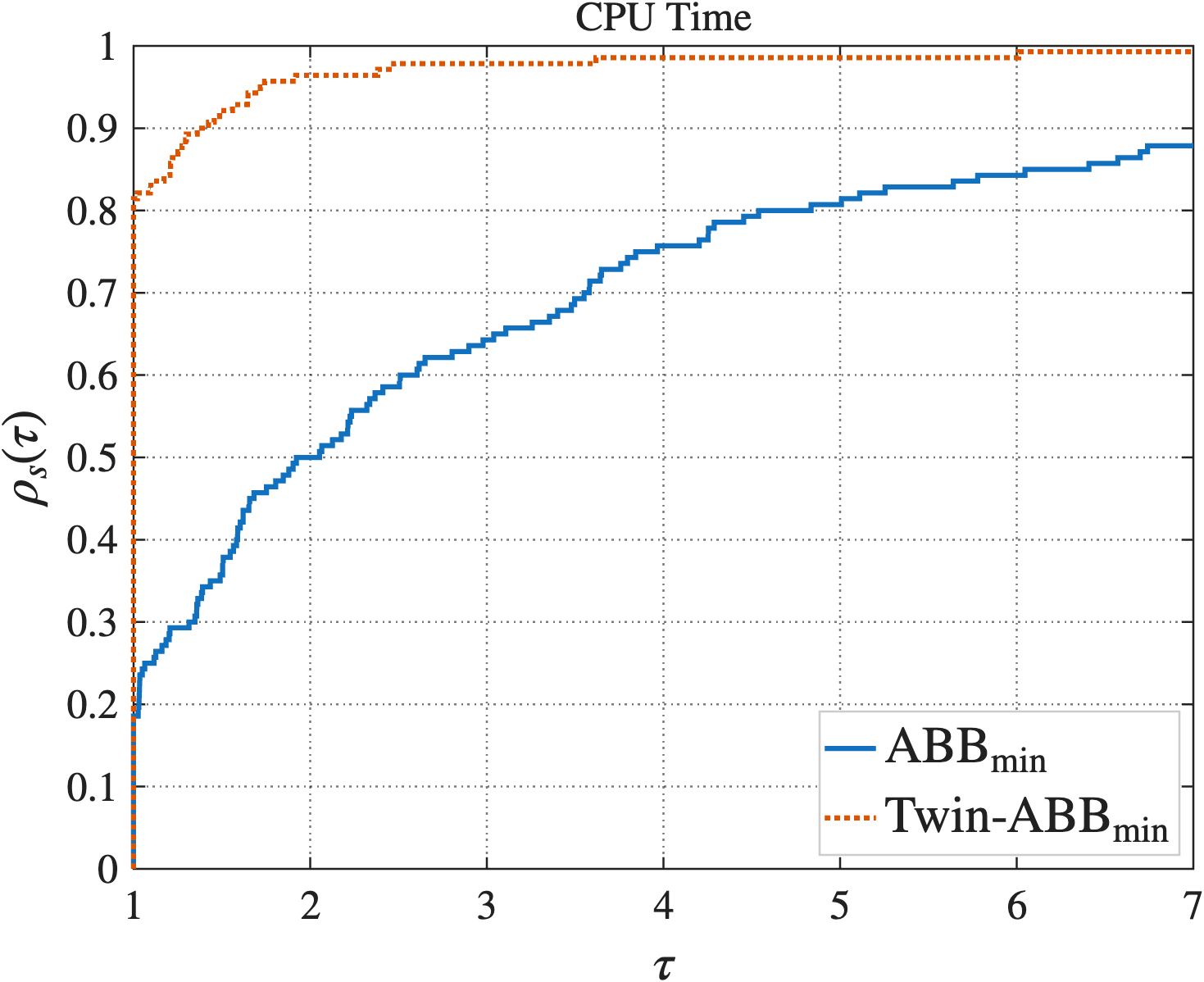}
  \hfill
\includegraphics[width=0.4\textwidth, height=4cm]{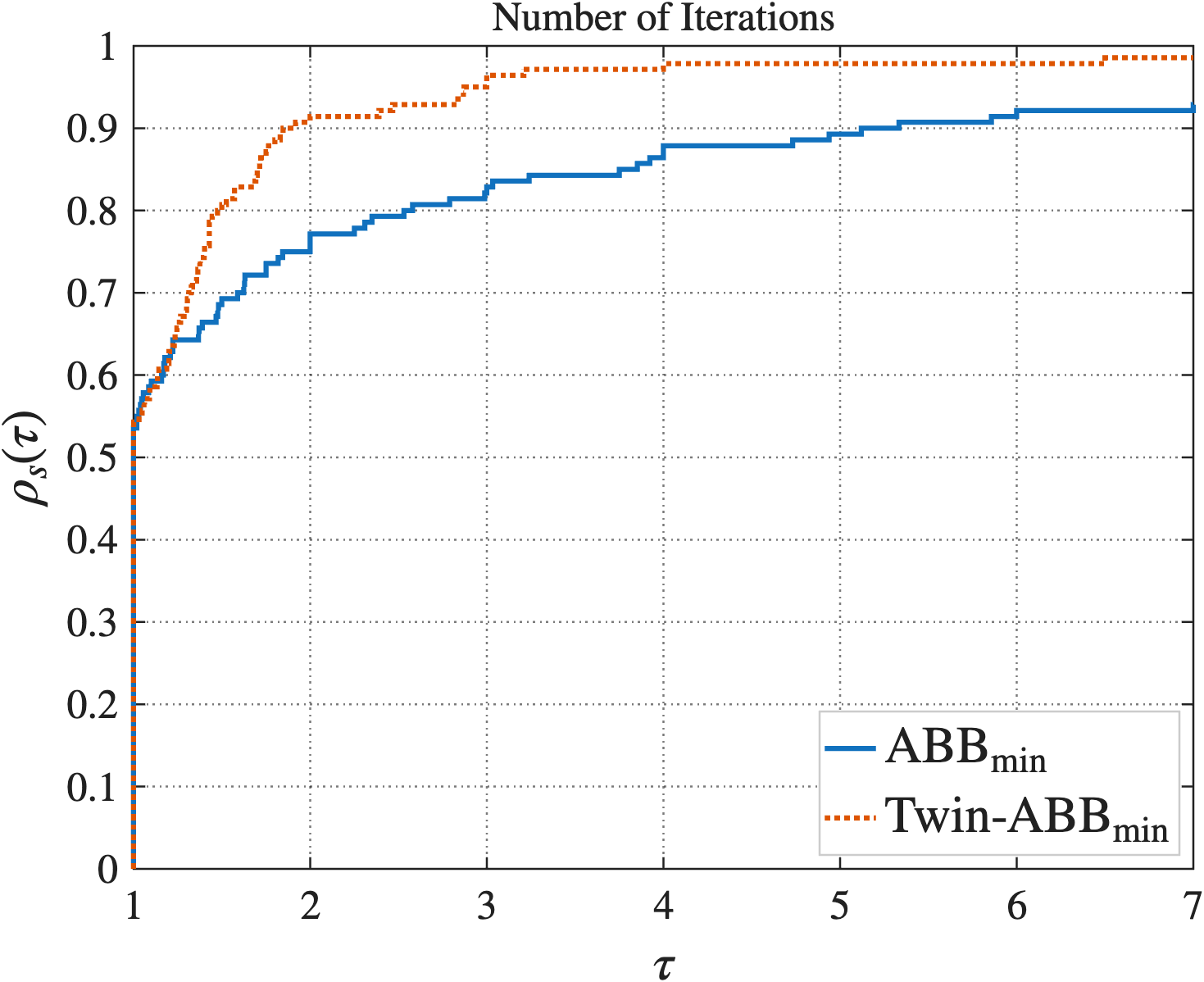} \\
\includegraphics[width=0.4\textwidth, height=4cm]{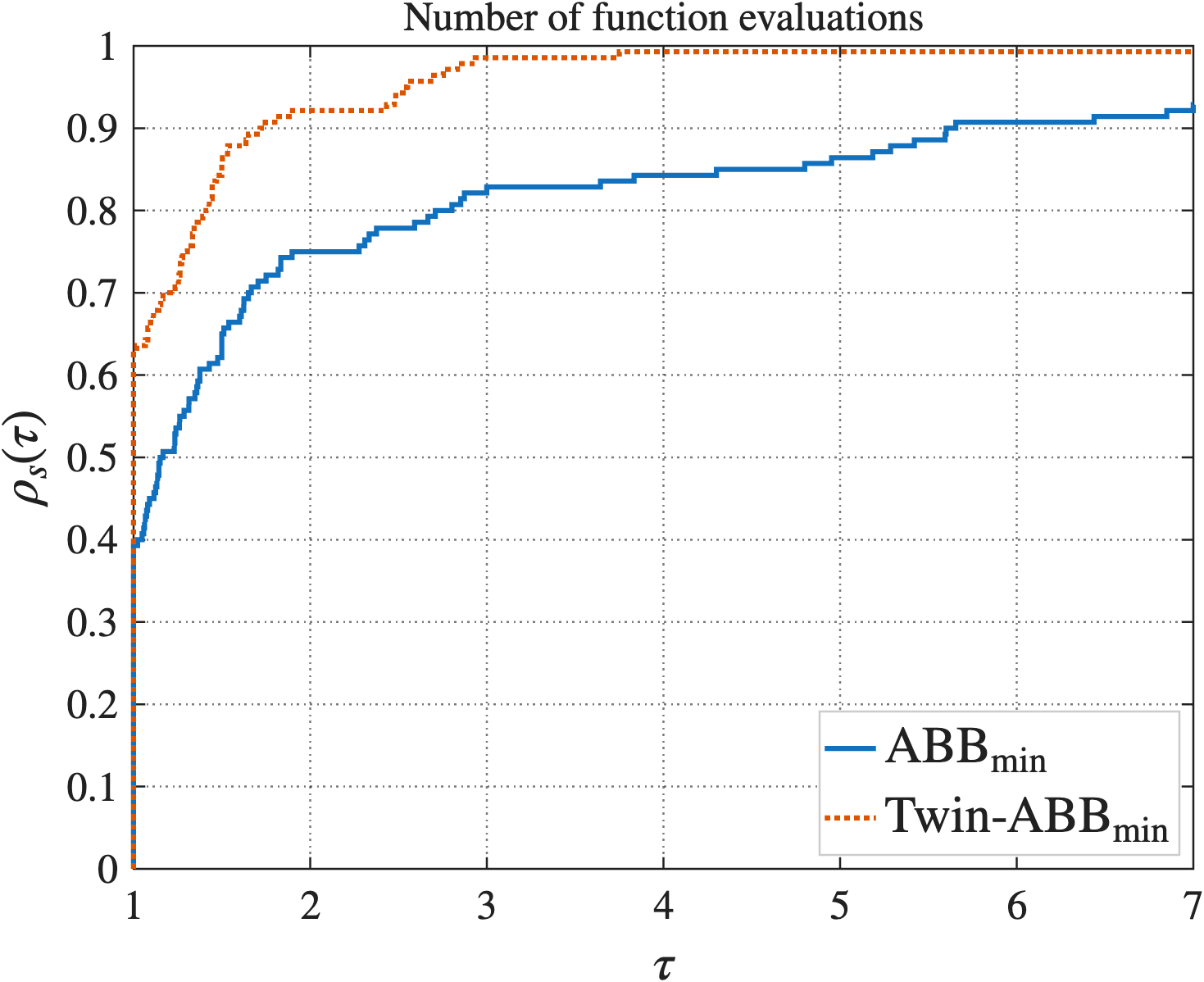}
  \hfill
\includegraphics[width=0.4\textwidth,  height=4cm]{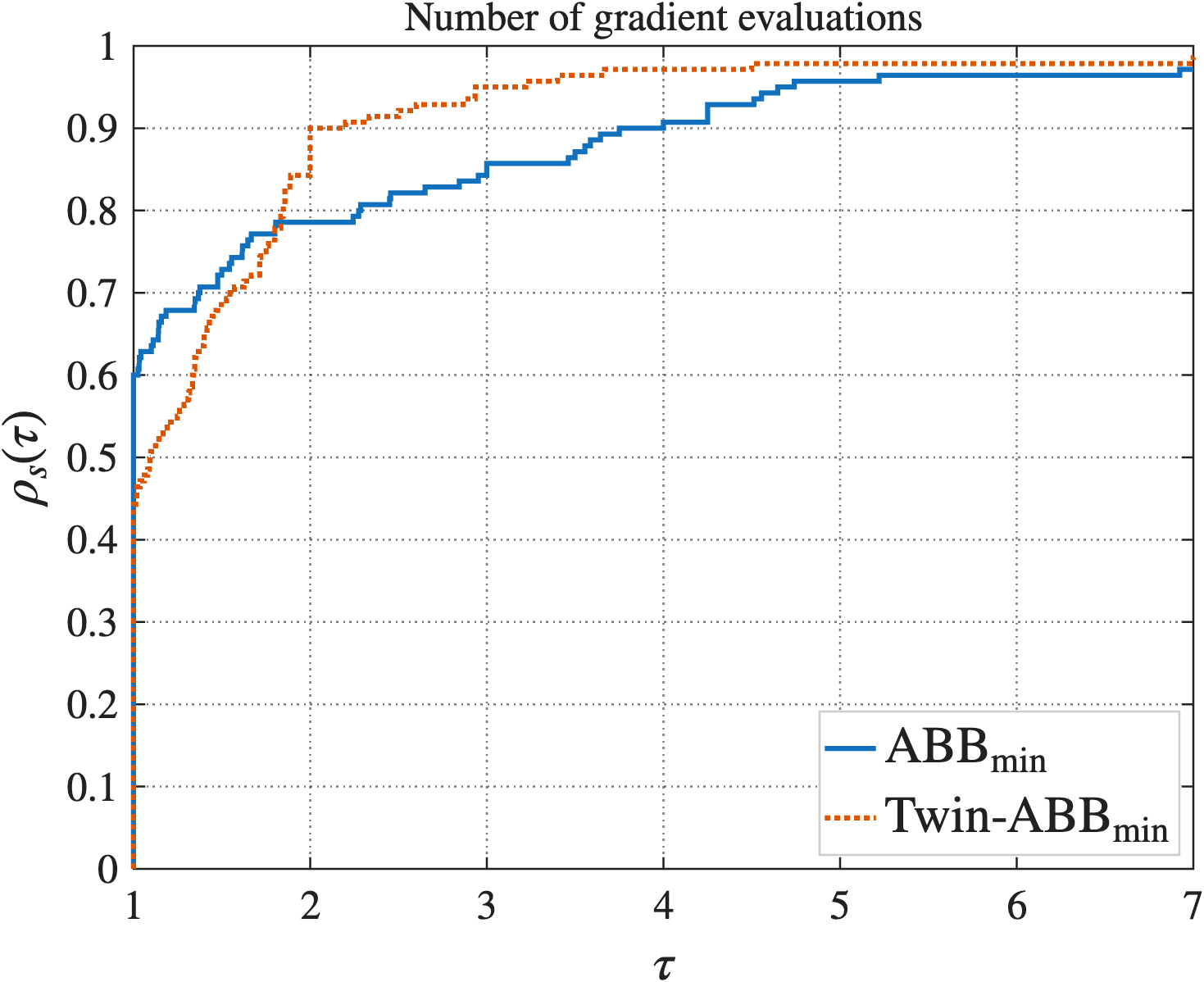} 
\caption{Performance profile on the CUTEst test set.}
\label{fig:cutest_nonquad_example}
\end{figure}

\begin{figure}[htb!]
\centering
\includegraphics[width=0.4\textwidth,height=4cm]{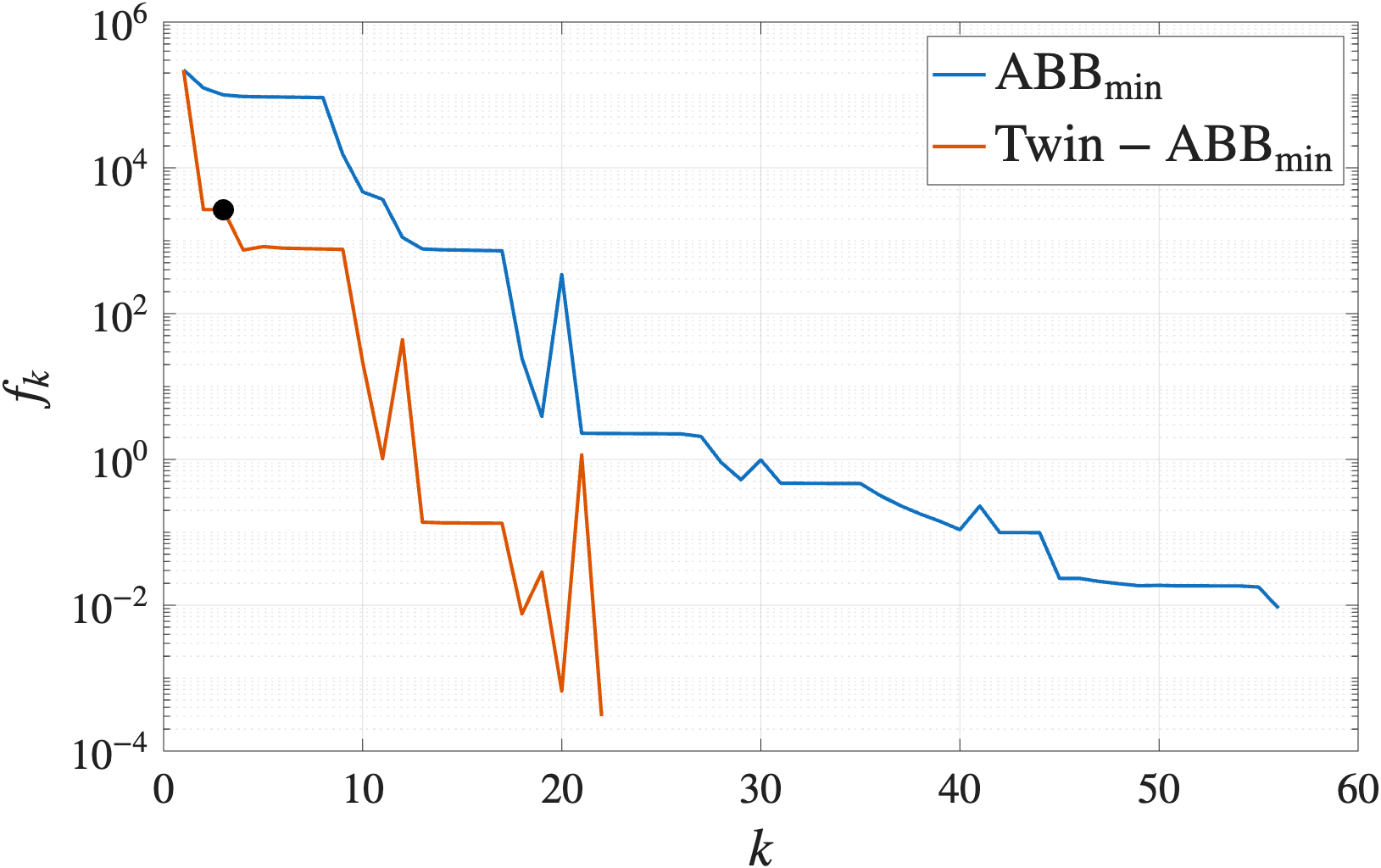}
  \hfill
\includegraphics[width=0.4\textwidth,height=4cm]{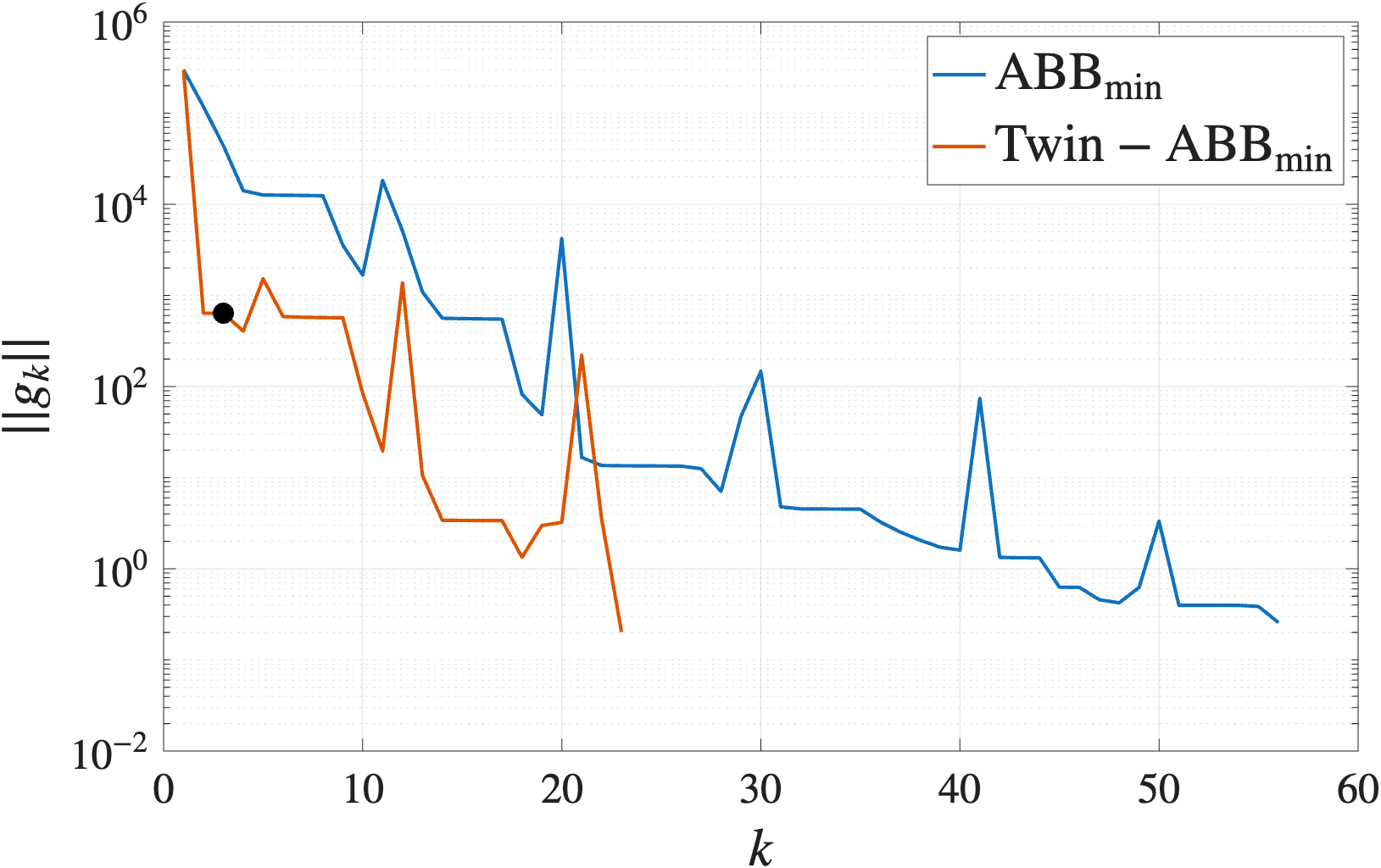}
\caption{Convergence history on the Problem 11 from CUTEst test set. The black dot represents the iterate in which the method switches to ABB$_{\min}$, ($f_k=\min \{f(\bx_k),f(\bz_k)\}), \|g_k\| = \min \{\|g(x_k)\|,\|g_(z_k)\|\} $.}
\label{fig:perf_cutest_nonquad}
\end{figure}

The computational results are summarized in Figure~\ref{fig:perf_cutest_nonquad}. Performance profiles indicate that the proposed Twin strategy outperforms the standard reference model ABB$_{\min}$ in all metrics evaluated. Although the standard ABB$_{\min}$ model retains a marginal advantage only at $\tau=1$ in terms of the number of iterations and gradient evaluations, it is subsequently outperformed. Our Twin strategy proves to be more efficient from the early stages. As with quadratic problems, the impact of Twin steps lies in their ability to provide initial conditions that are particularly favorable for the ABB$_{\min}$. This is clearly illustrated in the example shown in Figure \ref{fig:cutest_nonquad_example}, which exhibits behavior that is entirely analogous to that observed for the example in Figure \ref{fig:convergence_history}. About the behavior of the stepsizes $\alpha_k$ and $\beta_k$, numerical results show that the unconstrained Twin system \eqref{eq:system2x2_normalized} yield strictly positive stepsizes in more than 99.4\% of total iterations.

\section{Conclusions}
\label{sec:conclusions}
We have introduced a new strategy for gradient-based optimization, inspired by the Twin Kaczmarz method for linear systems \cite{VanLith2021}. The core innovation is to evolve two simultaneous search processes that cooperate through a \textit{Twin-Step} mechanism, selecting stepsizes that minimize the Euclidean distance between the iterates rather than evaluating the objective function in isolation. To establish favorable initial conditions for the algorithm, in line with the theoretical analysis carried out, the auxiliary starting point $\bz_0$ is explicitly constructed in Section~\ref{sec:z0} to foster orthogonality between initial search directions.

From a theoretical perspective, we have provided a convergence analysis, highlighting in particular that the convergence of the mutual distance is governed by the angle between the search directions.
The Twin approach has a huge benefit of very fast improvement in the first iterations.
The main risk of the Twin approach is near-parallelism between search directions, leading to big stepsizes and potentially divergence.
This can be solved in two ways. First, we can mitigate this by using damping factors $\eta_k$ (see Section~\ref{sec:nonquadcase}). 
For our experiments however, we have instead chosen a second approach:
in Section~\ref{sec:algo} we have proposed to combine our method with a standard gradient method for the later stages. The Twin-ABB$_{\min}$ algorithm starts with the Twin gradient algorithm in the early optimization phase and and adaptively switches to  ABB$_{\min}$ when the Twin coupling becomes collinear.
In our numerical experiments, both in the quadratic case and in the general case, the first stage of the algorithm produces a significant and rapid decrease in the objective function, providing initial conditions that prove particularly favorable for ABB$_{\min}$.

\section*{Acknowledgments}
This work has been partially supported by the Italian Ministry of University and Research (MIUR) through the  PRIN 2022 ``Spatio-temporal Functional Marked Point Processes for probabilistic forecasting of earthquake'' CUP B53C24006340006. Part of this work has been carried out while the first author visited the second author at TU Eindhoven.

\footnotesize
\bibliographystyle{plain} 
\bibliography{biblio2}

\end{document}